\numberwithin{equation}{section}
\newtheorem{thm}{Theorem}[section]
\newtheorem{lem}{Lemma}[section]
\newtheorem{cor}{Corollary}[section]
\newtheorem{prop}{Proposition}[section]
\theoremstyle{definition}
\newtheorem{defn}{Definition}[section]
\theoremstyle{remark}
\newtheorem{rem}{Remark}[section]
\begin{document}
\title{A note on spherically symmetric isentropic compressible flows with density-dependent viscosity coefficients\thanks{
This work is supported by NSFC 10571158  and China Postdoctoral
Science Foundation 20060400335} }
\author{Ting Zhang\thanks{E-mail: zhangting79@hotmail.com}, Daoyuan Fang\thanks{E-mail:
dyf@zju.edu.cn}\\
\textit{\small Department of Mathematics, Zhejiang University,
Hangzhou 310027, China} }
\date{}
\maketitle
\begin{abstract}
In this note, by constructing suitable approximate solutions, we
prove the existence of global weak solutions to the compressible
Navier-Stokes equations with density-dependent viscosity
coefficients in the whole space $\mathbb{R}^N$, $N\geq2$ (or
exterior domain), when the initial data are spherically symmetric.
In particular, we prove the existence of spherically symmetric
solutions to the Saint-Venant model for shallow water in the whole
space (or exterior domain). \\
\textbf{Keywords:} Compressible
Navier-Stokes equations; density-dependent viscosity coefficients
\end{abstract}
\section{Introduction}
In this note, we consider the following compressible Navier-Stokes
equations with density-dependent viscosity coefficients
    \begin{equation}
      \rho_t+\mathrm{div}(\rho \mathrm{U})=0,\label{SSIC-E1.1}
    \end{equation}
        \begin{equation}
          (\rho \mathrm{U})_t+\mathrm{div}(\rho \mathrm{U}\otimes \mathrm{U})-\mathrm{div}(2h(\rho)D(\mathrm{U}))
          -\nabla (g(\rho)\mathrm{div}\mathrm{U})+\nabla
          P(\rho)=0,\label{SSIC-E1.2}
        \end{equation}
where $t\in (0,+\infty)$ and $\mathrm{x}\in \mathbb{R}^N$,
$N\geq2$, $\rho(\mathrm{x},t)$, $\mathrm{U}(\mathrm{x},t)$ and
$P(\rho)=\rho^\gamma$ ($\gamma\geq1$) stand for the fluid density,
velocity and pressure respectively,
    $$
    D(\mathrm{U})=\frac{1}{2}(\nabla \mathrm{U}+(\nabla \mathrm{U})^\top)
    $$
is the strain tensor, $h(\rho)$ and $g(\rho)$ are the Lam\'{e}
viscosity coefficients satisfying
    \begin{equation}
      h(\rho)\geq0,\ 2h(\rho)+Ng(\rho)\geq0.
    \end{equation}

In the last several decades, significant progress on the system
(\ref{SSIC-E1.1})-(\ref{SSIC-E1.2}) with positive constant viscosity
coefficients has been achieved by many authors. In the case that the
initial data  are sufficiently regular and the initial density is
bounded away from zero, there exists a unique local strong solution,
and the solution exists globally in time provided that the initial
data are small perturbations of an uniform non-vacuum state. For
details, we refer the readers to papers
\cite{Danchin00,Matsumura1980} and the references therein.
 The situation becomes more complex in
the general case of nonnegative initial density, and a number of
important questions are still open. For example, the uniqueness of
global weak solutions. The first general result on the existence
of global weak solutions was obtained by Lions in
\cite{Lions1998}. There have been many generalizations of this
result, see
\cite{Feireisl2001,Hoff2004,Jiang2001,Jiang2003,Sun2006}. Using
the compatibility condition, Salvi-Stra\u{s}kraba \cite{Salvi93}
and Choe-Kim \cite{Choe03} obtained the existence and uniqueness
of the local strong solution.

The results in \cite{Hoff1991,Liu1998,Xin1998} show that the
compressible Navier-Stokes system with  constant viscosity
coefficients have the singularity in the presence of vacuum. By
some physical considerations,  Liu, Xin and Yang in \cite{Liu1998}
introduced the modified Navier-Stokes system with
density-dependent viscosity coefficients. As remarked in
\cite{Liu1998}, in the derivation of the Navier-Stokes equations
from the Boltzmann equation through the Chapman-Enskog expansion
to the second order, the viscosity is a function of the
temperature, and correspondingly depends on the density for
isentropic fluids. Meanwhile, in geophysical flows, many
mathematical models correspond to
(\ref{SSIC-E1.1})-(\ref{SSIC-E1.2}). In particular, the viscous
Saint-Venant system for shallow water is expressed exactly as
(\ref{SSIC-E1.1})-(\ref{SSIC-E1.2}) with $N=2$, $h(\rho)=\rho$,
$g(\rho)=0$ and $\gamma=2$
(\cite{Bresch2003-2,Bresch2003,Lions1998}).  As remarked in
\cite{Guo2007}, new mathematical challenges are encountered for
the shallow water equations and the multi-dimensional compressible
Navier-Stokes equations (\ref{SSIC-E1.1})-(\ref{SSIC-E1.2}). The
main difficulty is that the velocity can not be defined in the
vacuum state.

For one-dimensional compressible Navier-Stokes equations
(\ref{SSIC-E1.1})-(\ref{SSIC-E1.2}) with $h(\rho)=\rho^\theta$ and
$g(\rho)=0$, $\theta\in(0,1)$, there are many literatures on the
well-posedness theory of the solutions, see
\cite{fang,Jiang1998,Jiang2005,Liu1998,Yang2002,zhang2006-2}.
Considering the free boundary problem of the spherically symmetric
system, the local existence and uniqueness of the weak solution
were obtain in \cite{ChenP}, the large-time behavior of the global
solution for  data close to equilibrium was obtained in
\cite{Zhang2007,Zhang2008}. However, few results are available for
 multi-dimensional problems. In \cite{Bresch2003-2},  Bresch, Desjardins and Lin showed
 the existence of global weak solutions in dimension 2 or 3  for the Korteweg's system
 with the Korteweg stress tensor $k\rho\nabla\Delta\rho$. An
 interesting new entropy estimate is established in
 \cite{Bresch2003-2} in a priori way, which provided some high
 regularity for the density. Later, a similar result was obtained
 in \cite{Bresch2003} with an additional quadratic friction term
 $r\rho|\mathrm{U}|\mathrm{U}$. Recently, Mellet and Vasseur
 \cite{Mellet2007} proved the $L^1$ stability of weak solutions of
 the system (\ref{SSIC-E1.1})-(\ref{SSIC-E1.2}) with $N=2,3$ and $\gamma>1$, based on the new
 entropy estimate, extending the results in
 \cite{Bresch2003-2,Bresch2003} to the case $r=k=0$. Bresch and
 Desjardins constructed approximate solutions for the viscous
 shallow water system with drag terms or capillarity term and for
 the compressible Navier-Stokes equations with the cold pressure in
 \cite{Bresch2006}, and proved the global existence of weak
 solutions to these systems in \cite{Bresch2006,Bresch2007}. In
 \cite{Guo2007}, Guo, Jiu and Xin constructed a class of
 approximate solutions and proved the  existence of global weak
 solutions for the spherically symmetric  compressible
 Navier-Stokes equations with density-dependent viscosity in a
 bounded domain ($N=2,3$, $\gamma>1$).

In this note, we will construct a class of approximate solutions
and prove the global existence of weak solutions for the
spherically symmetric  compressible
 Navier-Stokes equations with density-dependent viscosity in the whole
 space  or
exterior domain ($N\geq2$, $\gamma\geq1$). Using the method in
\cite{Guo2007}, we can construct the
 approximate solutions on the annular domain
 $\{\varepsilon<|\mathrm{x}|<R\}$  by solving the approximate
 systems of (\ref{SSIC-E1.1})-(\ref{SSIC-E1.2}) with $h^\varepsilon(\rho)=h(\rho)+\varepsilon\rho^\theta$
  and
  $g^\varepsilon(\rho)=g(\rho)+(\theta-1)\varepsilon\rho^\theta$
  instead of $h(\rho)$ and $g(\rho)$. Then, using the usual zero
  extensions as in \cite{Hoff1992,Hoff2004}, we can construct the  approximate solutions
  on the  entire domain $\mathbb{R}^N$. But, the entropy estimates of  approximate
  solutions do not hold on the  entire domain $\mathbb{R}^N$, only hold on the annular domain.
  Using some techniques in Proposition \ref{SSIC-P3.3}, we can
  prove that $\nabla\sqrt{\rho}$ belongs to
  $L^\infty(0,T;L^2(\mathbb{R}^N))$, so that the nonlinear diffusion terms in the definition of weak solutions will make sense.
The extension method in \cite{Guo2007}, can preserves the uniform
$L^\infty(0,T;H^1(\Omega))$ estimate of $\sqrt{\rho^\varepsilon}$,
but seems not applicable to build approximate solutions   in the
whole
 space or
exterior domain.

\section{Statement of the results.}
The Cauchy problem of the compressible Navier-Stokes equations can
be written as
     \begin{equation}
      \rho_t+\mathrm{div}(\rho \mathrm{U})=0,\label{SSIC-E2.1}
    \end{equation}
        \begin{equation}
          (\rho \mathrm{U})_t+\mathrm{div}(\rho \mathrm{U}\otimes \mathrm{U})-\mathrm{div}(2h(\rho)D(\mathrm{U}))
          -\nabla (g(\rho)\mathrm{div}\mathrm{U})+\nabla P(\rho)=0,\label{SSIC-E2.2}
        \end{equation}
with initial conditions
    \begin{equation}
    \rho|_{t=0}=\rho_0\geq0,\ \rho \mathrm{U}|_{t=0}=\mathrm{m}_0.\label{SSIC-E2.3}
    \end{equation}
Before introducing the notion of weak solution, let us state the
assumptions on the viscosity coefficients, as in
\cite{Mellet2007}.

\medskip \noindent\textbf{Conditions on $h(\rho)$ and $g(\rho)$:}

We assume that $h(\rho)$ and $g(\rho)$ are two $C^2(0,\infty)$
functions satisfying
    \begin{equation}
      g(\rho)=2\rho h'(\rho)-2h(\rho),\label{SSIC-E2.4}
    \end{equation}
    \begin{equation}
      h'(\rho)\geq\nu,\ h(0)\geq0,\label{SSIC-E2.5}
    \end{equation}
        \begin{equation}
          |g'(\rho)|\leq \frac{1}{\nu}h'(\rho),
        \end{equation}
            \begin{equation}
              \nu_1 h(\rho)\leq 2h(\rho)+Ng(\rho)\leq\label{SSIC-E2.7}
              \nu_2h(\rho),
            \end{equation}
    where $\nu\in(0,1)$ and $\nu_2\geq \nu_1>0$ are three constants satisfying
    \begin{equation}
      \frac{4N-4\sqrt{2N^2-4N+4}}{N^2-4N+4}<\frac{\nu_1-2}{N},
           \ \frac{4N+4\sqrt{2N^2-4N+4}}{N^2-4N+4}>\frac{\nu_2-2}{N},
           \ N\geq3.\label{SSIC-E2.8-1}
    \end{equation}
     When $N\geq3$ and
    $\gamma\geq\frac{N}{N-2}$, we also require that
        \begin{equation}
          \liminf_{\rho\rightarrow\infty}\frac{h(\rho)}{\rho^{\frac{N-2}{N}\gamma+\varepsilon}}>0,\label{SSIC-E2.8}
        \end{equation}
    for some small $\varepsilon>0$.

\begin{rem}
  From the above conditions, one has
    \begin{equation}
      \left\{
      \begin{array}{ll}
        C\rho^{\frac{N-1}{N}+\frac{\nu_1}{2N}}\leq h(\rho)\leq  C\rho^{\frac{N-1}{N}+\frac{\nu_2}{2N}}, &\rho\geq1,\\
    C\rho^{\frac{N-1}{N}+\frac{\nu_2}{2N}}\leq h(\rho)\leq  C\rho^{\frac{N-1}{N}+\frac{\nu_1}{2N}}, &\rho\leq1.
      \end{array}
      \right.
    \end{equation}
\end{rem}

\begin{defn}\label{SSIC-D2.1}
  We say that $(\rho,\mathrm{U})$ is a weak solution of
  (\ref{SSIC-E2.1})-(\ref{SSIC-E2.3}) on $\mathbb{R}^N\times[0,T]$,
  provided that

(1)
    $$
    \rho\in L^\infty(0,T;L^1\cap L^\gamma(\mathbb{R}^N)),
    \    \sqrt{\rho}\in L^\infty(0,T;H^1(\mathbb{R}^N)),
    $$
        $$
    \sqrt{\rho}\mathrm{U}\in L^\infty(0,T;(L^2(\mathbb{R}^N))^N),
        $$
            $$
    h(\rho)D(\mathrm{U}) \in
L^2(0,T;(W^{-1,1}_{\mathrm{loc}}(\mathbb{R}^N))^{N\times N}),
 \ g(\rho)\mathrm{div} \mathrm{U}\in
L^2(0,T;W^{-1,1}_{\mathrm{loc}}(\mathbb{R}^N)),
            $$
with $\rho\geq0$;

(2) For any $t_2> t_1\geq0$ and $\phi_1\in
C^1_c(\mathbb{R}^N\times[0,\infty))$, the mass equation
(\ref{SSIC-E2.1}) holds in the following sense:
    \begin{equation}
    \int_{\mathbb{R}^N}\rho\phi_1 d\mathrm{x}|^{t_2}_{t_1}=\int^{t_2}_{t_1}\int_{\mathbb{R}^N}
    (\rho\partial_t\phi_1+\rho\mathrm{U}\cdot\nabla\phi_1)d\mathrm{x}dt;\label{SSIC-E2.10}
    \end{equation}

(3) The following equality holds for all  smooth test function
$\phi_2(t,x)\in (C^2_c(\mathbb{R}^N\times[0,\infty)))^N$ with
$\phi_2(T,\cdot)=0$:
    \begin{eqnarray}
      &&\int_{\mathbb{R}^N}\mathrm{m}_0\cdot\phi_2(0,x)d\mathrm{x}+\int^T_0\int_{\mathbb{R}^N}
      \left(\sqrt{\rho}(\sqrt{\rho} \mathrm{U})\cdot\partial_t\phi_2+
      \sqrt{\rho}\mathrm{U}\otimes\sqrt{\rho}\mathrm{U}:\nabla\phi_2
      \right)d\mathrm{x}dt\nonumber\\
            &&+\int^T_0\int_{\mathbb{R}^N}\rho^\gamma\mathrm{div}\phi_2
            d\mathrm{x}dt
            -<2h(\rho)D(\mathrm{U}) ,\nabla \phi_2>-<g(\rho)\mathrm{div} \mathrm{U},\mathrm{div}
            \phi_2>=0,\label{SSIC-E2.11}
    \end{eqnarray}
where the diffusion terms make sense when written as
    \begin{eqnarray*}
    <2h(\rho)D(\mathrm{U}),\nabla
    \phi>&=&-\int_{\mathbb{R}^N}\frac{h(\rho)}{\sqrt{\rho}}(\sqrt{\rho}\mathrm{U}_j)
    \partial_{ii}\phi_jd\mathrm{x}dt-
    \int_{\mathbb{R}^N}(\sqrt{\rho}\mathrm{U}_j)2h'(\rho)\partial_i\sqrt{\rho}\partial_i\phi_jd\mathrm{x}dt\\
    &&    -\int_{\mathbb{R}^N}\frac{h(\rho)}{\sqrt{\rho}}(\sqrt{\rho}\mathrm{U}_i)
    \partial_{ji}\phi_jd\mathrm{x}dt-
    \int_{\mathbb{R}^N}(\sqrt{\rho}\mathrm{U}_i)2h'(\rho)\partial_j\sqrt{\rho}\partial_i\phi_jd\mathrm{x}dt,
    \end{eqnarray*}
and
    $$
    <g(\rho)\mathrm{div} \mathrm{U},\mathrm{div}
            \phi>=-\int_{\mathbb{R}^N}\frac{g(\rho)}{\sqrt{\rho}}(\sqrt{\rho}\mathrm{U}_j)
    \partial_{ij}\phi_id\mathrm{x}dt-
    \int_{\mathbb{R}^N}(\sqrt{\rho}\mathrm{U}_j)2g'(\rho)\partial_j\sqrt{\rho}\partial_i\phi_id\mathrm{x}dt.
    $$
\end{defn}

In this paper, we will construct global spherically symmetric weak
solutions to (\ref{SSIC-E2.1})-(\ref{SSIC-E2.3}). The initial data
are assumed  to satisfy
    \begin{equation}
      \rho_0\geq0\ \textrm{a.e. in}\ \mathbb{R}^N,
      \ \mathrm{m}_0=0\ \textrm{a.e. on}\
      \{x\in\mathbb{R}^N|\rho_0(x)=0\},\label{SSIC-E2.12}
    \end{equation}
        \begin{equation}
          \rho_0\in L^1\cap L^\gamma(\mathbb{R}^N),
          \ \frac{\nabla   h(\rho_0)}{\sqrt{\rho_0}}\in L^2(\mathbb{R}^N),
          \ \frac{\mathrm{m}^2_0}{\rho_0}(1+\ln(1+\frac{\mathrm{m}^2_0}{\rho_0^2}))\in L^1(\mathbb{R}^N).\label{SSIC-E2.13}
        \end{equation}

 The main result of
this paper is the following:
\begin{thm}\label{SSIC-T2.1}
  Assume that $\gamma\geq1$, $h(\rho)$ and $g(\rho)$  satisfy conditions
  (\ref{SSIC-E2.4})-(\ref{SSIC-E2.8}). If the initial data have
  the form
    $$
    \rho_0=\rho_0(|\mathrm{x}|),
    \ \mathrm{m}_0=m_0(|\mathrm{x}|)\frac{\mathrm{x}}{r}
    $$
  and satisfy (\ref{SSIC-E2.12})-(\ref{SSIC-E2.13}), then the
  initial-value problem (\ref{SSIC-E2.1})-(\ref{SSIC-E2.3}) has a
  global spherically symmetric weak solution
        $$
         \rho=\rho(|\mathrm{x}|,t),
    \ \mathrm{U}=u(|\mathrm{x}|,t)\frac{\mathrm{x}}{r}
    $$
  satisfying for all $T>0$,
    \begin{equation}
      \rho(\mathrm{x},t)\in C([0,T];L^1(\mathbb{R}^N))),
     \label{SSIC-E2.14}
    \end{equation}
        \begin{equation}
          \int_{\mathbb{R}^N}\rho(\mathrm{x},t)d\mathrm{x}=
                    \int_{\mathbb{R}^N}\rho_0(\mathrm{x})d\mathrm{x}.\label{SSIC-E2.15}
        \end{equation}
  Moreover, it holds that
        \begin{equation}
          \sup_{t\in[0,T]}
           \int_{\mathbb{R}^N}
          \left(
                   \frac{1}{\rho}|\nabla
          h(\rho)|^2+\rho(1+|\mathrm{U}|^{2})(1+\ln(1+|\mathrm{U}|^{2}))
          \right)d\mathrm{x}\leq C,\label{SSIC-E2.16}
        \end{equation}
  where $C$ is a constant.
\end{thm}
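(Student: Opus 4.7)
The plan is to reduce the problem to a radial one-dimensional system, build approximate solutions on expanding annular domains via the Guo--Jiu--Xin scheme, zero-extend them to $\mathbb{R}^N$ in the manner of Hoff, and finally pass to the limit through the Mellet--Vasseur stability framework. Under the spherical ansatz $\rho = \rho(r,t)$, $\mathrm{U}(\mathrm{x},t) = u(r,t)\mathrm{x}/r$ with $r=|\mathrm{x}|$, the system (\ref{SSIC-E2.1})--(\ref{SSIC-E2.2}) collapses to two scalar equations in $(r,t)$ which are adapted to energy and Bresch--Desjardins (BD) type identities.

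For each $\varepsilon \in (0,1)$ and large $R > \varepsilon$, I would first construct a strong radial solution $(\rho^{\varepsilon,R}, u^{\varepsilon,R})$ on the annulus $A_{\varepsilon,R} = \{\varepsilon < |\mathrm{x}| < R\}$ of the regularized system obtained by replacing $h,g$ by $h^\varepsilon(\rho) = h(\rho) + \varepsilon\rho^\theta$ and $g^\varepsilon(\rho) = g(\rho) + (\theta-1)\varepsilon\rho^\theta$ for suitable $\theta\in(0,1)$, and by mollifying the initial data so that the approximate density is strictly positive on $A_{\varepsilon,R}$. On the annulus I would then establish, uniformly in $\varepsilon$ and $R$, four a priori estimates: conservation of mass, the basic energy identity, the BD entropy (providing $\sqrt{\rho^{\varepsilon,R}} \in L^\infty(0,T;H^1(A_{\varepsilon,R}))$ together with control of $\nabla h(\rho^{\varepsilon,R})/\sqrt{\rho^{\varepsilon,R}}$), and the Mellet--Vasseur higher-integrability bound on $\rho|u|^2\ln(1+|u|^2)$. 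Conditions (\ref{SSIC-E2.4})--(\ref{SSIC-E2.8}) are precisely what is needed to close all of these.

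The central obstacle appears when $(\rho^{\varepsilon,R},\rho^{\varepsilon,R}u^{\varepsilon,R})$ is extended by zero to all of $\mathbb{R}^N$. While the energy and Mellet--Vasseur bounds are immediately preserved, the BD estimate for $\nabla\sqrt{\rho}$ is not, because the jump of $\sqrt{\rho^{\varepsilon,R}}$ across the inner boundary $r = \varepsilon$ produces singular distributional derivatives. Recovering $\sqrt{\rho^\varepsilon} \in L^\infty(0,T;H^1(\mathbb{R}^N))$ uniformly in $\varepsilon$ is precisely the role of Proposition \ref{SSIC-P3.3}: exploiting the radial ODE for $\sqrt{\rho^{\varepsilon,R}}$ together with the growth conditions on $h$, one argues that the trace of $\rho^{\varepsilon,R}$ at $r=\varepsilon$ (and the tail near $r=R$) is controllable and that the offending boundary contributions vanish in the limit. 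This step, absent from the bounded-domain theory of Guo--Jiu--Xin, is what I expect to be the main technical difficulty.

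Once the extended approximate solutions enjoy uniform bounds on $\mathbb{R}^N$, I would pass to the limit $\varepsilon \to 0$ along the Mellet--Vasseur template: extract weak-$\ast$ limits of $\rho^\varepsilon$, $\sqrt{\rho^\varepsilon}$, $\nabla\sqrt{\rho^\varepsilon}$, $\sqrt{\rho^\varepsilon}\,u^\varepsilon$, use the bound on $\rho|u|^2\ln(1+|u|^2)$ to upgrade the weak convergence of $\sqrt{\rho^\varepsilon}\,u^\varepsilon$ to strong convergence in $L^2_{\mathrm{loc}}$, and thereby pass to the limit in the convective term and in each factor of the diffusion terms written in the split form of Definition \ref{SSIC-D2.1}. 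The continuity (\ref{SSIC-E2.14}) and mass conservation (\ref{SSIC-E2.15}) follow from the weak mass equation (\ref{SSIC-E2.10}) tested against smooth radial cutoffs tending to $1$, together with the uniform $L^1\cap L^\gamma$ bound; the estimate (\ref{SSIC-E2.16}) is the limit of the BD and Mellet--Vasseur bounds.
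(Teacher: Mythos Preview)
Your overall strategy matches the paper exactly: annular approximation with the regularized viscosities $h^\varepsilon=h+\varepsilon\rho^\theta$, $g^\varepsilon=g+(\theta-1)\varepsilon\rho^\theta$, zero extension to $\mathbb{R}^N$, and Mellet--Vasseur compactness. But your account of Proposition~\ref{SSIC-P3.3} misidentifies both its conclusion and its mechanism, and this is the step you flag as the heart of the argument.

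You claim Proposition~\ref{SSIC-P3.3} recovers a uniform bound $\sqrt{\rho^\varepsilon}\in L^\infty(0,T;H^1(\mathbb{R}^N))$ for the \emph{approximations}, via control of the trace at $r=\varepsilon$. That cannot work: the only available pointwise bound is $\sqrt{\rho^j(\varepsilon_j,t)}\leq C\varepsilon_j^{-(N-1)/2}$, which diverges, so $\sqrt{\rho^j}$ viewed on all of $\mathbb{R}^N$ has a genuine jump at $r=\varepsilon_j$ and is never in $H^1(\mathbb{R}^N)$ uniformly. What Proposition~\ref{SSIC-P3.3} actually does is simpler and concerns only the \emph{limit}. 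The BD bound (\ref{SSIC-E3.12}) holds on each annulus $B_{\varepsilon_j,R_j}$, hence on any fixed $B_{1/k,n}$ once $\varepsilon_j\leq 1/k$ and $R_j\geq n$; one extracts a weak-$\ast$ limit of $\nabla\sqrt{\rho^j}$ in $L^\infty(0,T;L^2(B_{1/k,n}))$, identifies it with $\nabla\sqrt{\rho}$ via (\ref{SSIC-E3.16}), and lower semicontinuity gives $\|\nabla\sqrt{\rho}\|_{L^\infty(0,T;L^2(B_{1/k,n}))}\leq C$ with $C$ independent of $k,n$. Exhausting in $k$ and $n$ then yields $\nabla\sqrt{\rho}\in L^\infty(0,T;L^2(\mathbb{R}^N))$. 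No trace control enters.

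The radial trace arguments you sketch are genuinely needed, but elsewhere: when passing to the limit in the momentum equation, the annular weak form carries boundary terms $\varepsilon_b^j$ at $r=\varepsilon_j$ from the viscous stress and the pressure, and one must show $\varepsilon_b^j\to 0$. This uses the bound $h(\rho^j(\varepsilon_j,t))\leq C\varepsilon_j^{-(N-1)}$ together with $\phi(\varepsilon_j,t)\to 0$ for the test function, and the identity $\rho^j_t+\rho^j u^j_r=0$ at $r=\varepsilon_j$ coming from the boundary condition $u^j(\varepsilon_j,t)=0$. Your proposal does not mention this step, which is the other place where the whole-space argument departs from the Guo--Jiu--Xin bounded-domain theory.
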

\begin{rem}
  Using the similar argument as that in \cite{Guo2007}, one can obtain
  that
     $$
          \sup_{t\in[0,T]}
           \int_{\mathbb{R}^N}
         \rho|\mathrm{U}|^{2+\eta}
         d\mathrm{x}\leq C,
        $$
 when $
           \int_{\mathbb{R}^N}
         \rho_0|\mathrm{U_0}|^{2+\eta}
         d\mathrm{x}\leq C
        $ for some small $\eta\in(0,1)$.
\end{rem}
\begin{rem}
Similarly,  using the usual zero
  extension method, one can obtain the similar result for the  existence of
global weak
 solutions for the spherically symmetric  compressible
 Navier-Stokes equations with density-dependent viscosity in a
 bounded domain ($N\geq2$, $\gamma\geq1$).
\end{rem}
\begin{rem}
  Under conditions   (\ref{SSIC-E2.4})-(\ref{SSIC-E2.8}), using
  the similar argument as that in \cite{Mellet2007}, one can easily obtain
  the similar result as that in \cite{Mellet2007} with
  $N\geq2$ and $\gamma\geq1$.
\end{rem}

\vspace{0.3cm}\noindent$\bullet$ \textbf{Exterior problem}
\vspace{0.3cm}

 Using the similar proof of Theorem \ref{SSIC-T2.1}, we can study
 the following exterior problem:
 \begin{equation}
      \rho_t+\mathrm{div}(\rho \mathrm{U})=0,
      \ t>0,\mathrm{x}\in\Omega,\label{SSIC-E2.1-E}
    \end{equation}
        \begin{equation}
          (\rho \mathrm{U})_t+\mathrm{div}(\rho \mathrm{U}\otimes \mathrm{U})-\mathrm{div}(2h(\rho)D(\mathrm{U}))
          -\nabla (g(\rho)\mathrm{div}\mathrm{U})+\nabla P(\rho)=0,\label{SSIC-E2.2-E}
        \end{equation}
with boundary and initial conditions
    \begin{equation}
   (\rho\mathrm{U})|_{\mathrm{x}\in\partial \Omega}=0,
   \ \rho|_{t=0}=\rho_0\geq0,\ \rho \mathrm{U}|_{t=0}=\mathrm{m}_0,\label{SSIC-E2.3-E}
    \end{equation}
where $\Omega=\{\mathrm{x}\in\mathbb{R}^N||x|>1\}$, $N\geq2$.

\begin{defn}\label{SSIC-D2.2}
  We say that $(\rho,\mathrm{U})$ is a weak solution of
  (\ref{SSIC-E2.1-E})-(\ref{SSIC-E2.3-E}) on $\Omega\times[0,T]$,
  provided

(a) The condition (1)
   in Definition \ref{SSIC-D2.1}  where $\mathbb{R}^N$ is replaced by $\Omega$;

 (b) For any $t_2> t_1\geq0$ and $\phi_1\in
C^1_c(\mathbb{R}^N\times[0,\infty))$, the mass equation
(\ref{SSIC-E2.1}) holds in the following sense:
    \begin{equation}
    \int_{\Omega}\rho\phi_1 d\mathrm{x}|^{t_2}_{t_1}=\int^{t_2}_{t_1}\int_{\Omega}
    (\rho\partial_t\phi_1+\rho\mathrm{U}\cdot\nabla\phi_1)d\mathrm{x}dt;\label{SSIC-E2.10-E}
   \end{equation}

(c) The condition (3)
   in Definition \ref{SSIC-D2.1}  where $\mathbb{R}^N$ is replaced by $\Omega$.
\end{defn}

 Using the similar proof of Theorem \ref{SSIC-T2.1} and $\|\sqrt{\rho(r)}\|_{L^\infty([1,\infty))}\lesssim
  \|\sqrt{\rho(r)}\|_{H^1([1,\infty))}$, we can obtain the similar result without the condition (\ref{SSIC-E2.8}). Here, we give
  the following theorem and omit the  proof.
\begin{thm}\label{SSIC-T2.2}
  Assume that $\gamma\geq1$, $h(\rho)$ and $g(\rho)$   satisfy conditions
  (\ref{SSIC-E2.4})-(\ref{SSIC-E2.7}). If the initial data have
  the form
    $$
    \rho_0=\rho_0(|\mathrm{x}|),
    \ \mathrm{m}_0=m_0(|\mathrm{x}|)\frac{\mathrm{x}}{r}
    $$
  and satisfy (\ref{SSIC-E2.12})-(\ref{SSIC-E2.13}) where $\mathbb{R}^N$ is replaced by $\Omega$, then the
  initial-value problem (\ref{SSIC-E2.1})-(\ref{SSIC-E2.3}) has a
  global spherically symmetric weak solution
        $$
         \rho=\rho(|\mathrm{x}|,t),
    \ \mathrm{U}=u(|\mathrm{x}|,t)\frac{\mathrm{x}}{r}
    $$
  satisfying (\ref{SSIC-E2.14})-(\ref{SSIC-E2.16})  where $\mathbb{R}^N$ is replaced by $\Omega$, for all $T>0$.
\end{thm}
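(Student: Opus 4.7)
The plan is to run the proof of Theorem \ref{SSIC-T2.1} with two modifications dictated by the exterior geometry $\Omega=\{|x|>1\}$: the inner artificial boundary $\{|x|=\varepsilon\}$ of the annular approximation is replaced by the physical boundary $\{|x|=1\}$, and the growth assumption (\ref{SSIC-E2.8}) is bypassed thanks to the 1D Sobolev embedding $H^1([1,\infty))\hookrightarrow L^\infty([1,\infty))$ pointed out by the authors.

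Concretely, I would first construct approximate solutions $(\rho^{\varepsilon,R},U^{\varepsilon,R})$ on the annuli $A_R=\{1<|x|<R\}$ with regularized viscosities $h^\varepsilon(\rho)=h(\rho)+\varepsilon\rho^\theta$ and $g^\varepsilon(\rho)=g(\rho)+(\theta-1)\varepsilon\rho^\theta$ and no-slip boundary conditions $(\rho U)|_{\partial A_R}=0$, exactly as in \cite{Guo2007} restricted to spherical symmetry. The reduction to a 1D problem in $r\in(1,R)$ with weight $r^{N-1}$ is routine. The standard energy estimate and the Bresch--Desjardins entropy estimate, carried out on $A_R$, produce the uniform bound
\begin{equation*}
\sup_{t\in[0,T]}\int_{A_R}\Bigl(\rho|U|^2+\rho^\gamma+\tfrac{1}{\rho}|\nabla h(\rho)|^2\Bigr)dx\leq C,
\end{equation*}
while the Mellet--Vasseur argument yields the logarithmic bound on $\rho|U|^2\ln(1+|U|^2)$.

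The key simplification relative to the whole-space case appears at this stage. In radial coordinates the BD estimate reads $\int_1^R|(\sqrt{\rho})_r|^2 r^{N-1}\,dr\leq C$, so because $r\geq 1$ we obtain $\sqrt{\rho^{\varepsilon,R}(\cdot,t)}\in H^1([1,\infty))$ uniformly in $\varepsilon,R$, and the 1D embedding on a half-line supplies a uniform pointwise bound on $\rho^{\varepsilon,R}$. This makes $\rho^\gamma$ directly integrable and removes the need to bound the pressure through $h(\rho)$, i.e., removes the need for (\ref{SSIC-E2.8}); in Theorem \ref{SSIC-T2.1} this argument failed near $r=0$, where the weight $r^{N-1}$ degenerates and Proposition~\ref{SSIC-P3.3} together with (\ref{SSIC-E2.8}) had to be invoked instead.

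Finally, I would pass to the limit $R\to\infty$ and then $\varepsilon\to 0$. I expect the main obstacle to be, as in \cite{Mellet2007,Guo2007}, the strong compactness of $\sqrt{\rho^{\varepsilon,R}}\,U^{\varepsilon,R}$ in $L^2_{\mathrm{loc}}$ needed to identify the limits of the convective term $\sqrt{\rho}\,U\otimes\sqrt{\rho}\,U$ and of the nonlinear diffusion terms $h(\rho)D(U)$ and $g(\rho)\,\mathrm{div}\,U$ in the sense of Definition \ref{SSIC-D2.2}; this compactness is extracted from the Mellet--Vasseur logarithmic estimate exactly as in the whole-space case. Once it is in place, the weak formulation of Definition \ref{SSIC-D2.2} is inherited in the limit, the regularity in part (a) follows from the uniform estimates, and (\ref{SSIC-E2.14})--(\ref{SSIC-E2.16}) (with $\mathbb{R}^N$ replaced by $\Omega$) are preserved by weak lower semicontinuity.
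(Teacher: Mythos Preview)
Your proposal is correct and matches the paper's indicated approach: the paper omits the proof, saying only to repeat the argument for Theorem~\ref{SSIC-T2.1} and use $\|\sqrt{\rho(r)}\|_{L^\infty([1,\infty))}\lesssim\|\sqrt{\rho(r)}\|_{H^1([1,\infty))}$ to drop condition~(\ref{SSIC-E2.8}). One small correction: in the whole-space proof, (\ref{SSIC-E2.8}) is actually invoked in Proposition~\ref{SSIC-P3.1} (to control the pressure term in the Mellet--Vasseur logarithmic estimate when $N\geq3$ and $\gamma\geq N/(N-2)$), not in Proposition~\ref{SSIC-P3.3}, though your identification of the mechanism---the uniform $L^\infty$ bound on $\rho$ coming from $r\geq1$ renders that pressure term harmless---is exactly right.
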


\begin{rem}
   In particular, we get the
existence of spherically symmetric solutions to the Saint-Venant
model for shallow water system in the whole space or  exterior
domain.
\end{rem}

\section{Proof of Theorem \ref{SSIC-T2.1}}

The key point of the proof of Theorem \ref{SSIC-T2.1} is to
construct smooth approximate solutions satisfying the a priori
estimates required in the $L^1$ stability analysis. The crucial
issue is to obtain lower and upper bounds of the density. To this
end, we study the following system as an approximate system of
 (\ref{SSIC-E2.1})-(\ref{SSIC-E2.2}).
    \begin{equation}
      \rho_t+\mathrm{div}(\rho \mathrm{U})=0,\label{SSIC-E3.1}
    \end{equation}
        \begin{eqnarray}
          &&(\rho \mathrm{U})_t+\mathrm{div}(\rho \mathrm{U}\otimes \mathrm{U})
          -\mathrm{div}((2h(\rho)+\varepsilon\rho^{\theta})D(\mathrm{U}))\nonumber\\
                &&          -\nabla ((g(\rho)+(\theta-1)\varepsilon\rho^{\theta})\mathrm{div}\mathrm{U})+\nabla P(\rho)=0,\label{SSIC-E3.2}
        \end{eqnarray}
where $\varepsilon>0$ is a constant and
$\theta=\frac{N-1+\alpha}{N}$ with $\alpha\in(0,1)$ satisfying
    \begin{equation}
   V_1(\frac{N}{1-\alpha})<\min\{\frac{\nu_1-2}{N},\frac{\alpha-1}{N}\},
   \ V_2(\frac{N}{1-\alpha})>\frac{\nu_2-2}{N},\label{SSIC-E3.3-1}
    \end{equation}
where
    $$
    V_1(m)=\frac{4N(m-1)-4\sqrt{N^2(m-1)^2+(N-1)(m-1)(m-2)^2}}{(N-1)(m-2)^2}
    $$
and
    $$
    V_2(m)=\frac{4N(m-1)+4\sqrt{N^2(m-1)^2+(N-1)(m-1)(m-2)^2}}{(N-1)(m-2)^2}.
    $$

\begin{rem}
  From (\ref{SSIC-E2.8-1}), we can choose a small constant
  $\alpha$ satisfying (\ref{SSIC-E3.3-1}).
\end{rem}

When $\rho(\mathrm{x},t)=\rho(r,t)$,
$\mathrm{U}(\mathrm{x},t)=u(r,t)\frac{\mathrm{x}}{r}$,
(\ref{SSIC-E3.1})-(\ref{SSIC-E3.2}) becomes
    \begin{equation}
      \rho_t+(\rho u)_r+\frac{(N-1)\rho u}{r}=0,\label{SSIC-E3.3}
    \end{equation}
        \begin{equation}
          \rho u_t+\rho u u_r+(\rho^\gamma)_r+
          (2h +\varepsilon\rho^\theta)_r\frac{(N-1)u}{r}
          =((2h+g+\theta\varepsilon\rho^\theta)(u_r+\frac{(N-1)u}{r}))_r,\label{SSIC-E3.4}
        \end{equation}
for $r>0$. We will first construct the smooth solution of
(\ref{SSIC-E3.3})-(\ref{SSIC-E3.4}) in the truncated region
$0<\varepsilon<r<R<\infty$ with the following boundary conditions
and initial condition
    \begin{equation}
      u(r,t)|_{r=\varepsilon}=
            u(r,t)|_{r=R}=0,\label{SSIC-E3.6-10}
    \end{equation}
        \begin{equation}
          (\rho, u)(r,0)=(\rho_{0,\varepsilon,R,\delta},u_{0,\varepsilon,R,\delta}):=(\rho_{0,\varepsilon,R}*J_\delta,
          u_{0,\varepsilon,R}*J_\delta),\ \varepsilon<r<R,\label{SSIC-E3.6}
        \end{equation}
where $J_\delta$ is a standard mollifier,
    $$
    \rho_{0,\varepsilon,R}(r)=\left\{
    \begin{array}{ll}
    \rho_0(\varepsilon)+\varepsilon,&r\in[0,\varepsilon],\\
        \rho_0(r)+\varepsilon,&r\in[\varepsilon,R],\\
            \rho_0(R)+\varepsilon,&r\in[R,\infty),
    \end{array}
    \right.
    $$
and
    $$
    u_{0,\varepsilon,R}(r)=\left\{
    \begin{array}{ll}
    0,&r\in[0,\varepsilon+2\delta],\\
        \frac{m_0(r)}{\rho_0(r)+\varepsilon},&r\in[\varepsilon+2\delta,R-2\delta],\\
            0,&r\in[R-2\delta,\infty).
    \end{array}
    \right.
    $$

We assume that $\varepsilon$ and $R$ satisfy $\varepsilon
R^N\leq\sqrt{\varepsilon}$. Letting $\varepsilon\rightarrow0$ and
$R\rightarrow\infty$, we can easily obtain that
$(\rho_{0,\varepsilon,R}, u_{0,\varepsilon,R})$ convergence to
$(\rho_{0}, u_{0})$ in  spaces given in (\ref{SSIC-E2.13}). From
(\ref{SSIC-E3.3-1}) and similar arguments as that in
\cite{Guo2007}, one can obtain the smooth solutions
$(\rho^{\varepsilon,R,\delta}(r,t),u^{\varepsilon,R,\delta}(r,t))$
to the approximate system (\ref{SSIC-E3.3})-(\ref{SSIC-E3.6}).

\begin{rem}
  To obtain the existence of
  $(\rho^{\varepsilon,R,\delta}(r,t),u^{\varepsilon,R,\delta}(r,t))$,
  we need to consider the following system in the Lagrangian
  coordinates:
    $$
    \rho_\tau+\rho^2(r^{N-1}u)_x=0,
    $$
        $$
    r^{1-N}u_\tau+(\rho^\gamma)_x=[(\rho h+\rho g+\varepsilon\theta\rho^{\theta+1})
    (r^{N-1}u)_x]_x-(h+\varepsilon\rho^\theta)_x\frac{(N-1)u}{r},
        $$
    \begin{equation*}
      u(0,\tau)=
            u(1,\tau)=0,
    \end{equation*}
        \begin{equation*}
          (\rho,
          u)(\cdot,0)=(\rho_{0,\varepsilon,R,\delta},u_{0,\varepsilon,R,\delta}).
        \end{equation*}
From (\ref{SSIC-E3.3-1}) and similar arguments as that in
\cite{Guo2007}, one can obtain that $u\in
L^\infty(0,T;L^{\frac{N}{1-\alpha}}_x)$, $(\rho^\theta)_x\in
L^\infty(0,T;L^{\frac{N}{1-\alpha}}_x)$ and $\rho^{-1}\in
L^\infty_{\tau x}$ (for simplicity, we omit the superscript). To
estimate $\|u\|_{L^\infty(0,T;L^{\frac{N}{1-\alpha}}_x)}$, we need
to estimate the following terms
    \begin{eqnarray}
    &&-(m-1)(2h\rho+g\rho+\varepsilon\theta\rho^{\theta+1})r^{2N-2}u^{m-2}u_x^2\nonumber\\
        &&  -[2(N-1)\rho h+(N-1)^2\rho
    g+\varepsilon(N-1)(\theta(N-1)-N+2)\rho^{1+\theta}]r^{-2}\rho^{-2}u^m\nonumber\\
        &&-[\rho g m(N-1)+\varepsilon
        m(N-1)(\theta-1)\rho^{1+\theta}]r^{N-2}\rho^{-1}u^{m-1}u_x,\label{SSIC-E3.8-1}
    \end{eqnarray}
where $m=\frac{N}{1-\alpha}$. From (\ref{SSIC-E3.3-1}), we have
    $$(\ref{SSIC-E3.8-1})\leq -C(\rho h+\varepsilon\rho^{\theta+1})(r^{2N-2}u^{m-2}u_x^2+r^{-2}\rho^{-2}u^m).$$
Then, using similar arguments as that in \cite{Guo2007}, one can
obtain  $u\in L^\infty(0,T;L^{\frac{N}{1-\alpha}}_x)$.
\end{rem}

So far,
$$(\rho^{\varepsilon,R,\delta},u^{\varepsilon,R,\delta})$$ are
defined on $\varepsilon\leq r\leq R$. To take the limit
$(\varepsilon_j,R_j,\delta_j)\rightarrow(0,\infty,0)$, we extend
$(\rho^{\varepsilon,R,\delta},u^{\varepsilon,R,\delta})$ to the
whole space $\mathbb{R}^N$ in the following way
    \begin{equation}
      \widetilde{\rho}^{\varepsilon,R,\delta}(r,t)
      =\left\{\begin{array}{ll}
    \rho^{\varepsilon,R,\delta}(r,t),&r\in[\varepsilon,R],\\
        0,&\mathrm{else},
              \end{array}
      \right.\label{SSIC-E3.7}
    \end{equation}
    \begin{equation}
      \widetilde{u}^{\varepsilon,R,\delta}(r,t)
      =\left\{\begin{array}{ll}
    u^{\varepsilon,R,\delta}(r,t),&r\in[\varepsilon,R],\\
        0,&\mathrm{else}.
              \end{array}
      \right.\label{SSIC-E3.8}
    \end{equation}
For simplicity, we denote the obtained  approximate solutions
$\{\widetilde{\rho}^{\varepsilon_j,R_j,\delta_j},\widetilde{u}^{\varepsilon_j,R_j,\delta_j}\}$
by $\{\rho^j,{u}^j\}$. Let $\rho^j(\mathrm{x},t)=\rho^j(r,t)$,
$\mathrm{U}^j(\mathrm{x},t)=u^j(r,t)\frac{\mathrm{x}}{r}$,
$B_{\varepsilon,R}=\{\mathrm{x}\in \mathbb{R}^N|\
\varepsilon<|\mathrm{x}|<R\}$ and $B_{R}=\{\mathrm{x}\in
\mathbb{R}^N|\ |\mathrm{x}|<R\}$.

Using similar arguments as that in  proofs of Lemmas 3.2 and 4.1
in \cite{Guo2007}, and the similar argument as that in
\cite{Lions1998} (\S5.5),  we have the following lemma.
\begin{lem}\label{SSIC-L3.1}
  There exists a constant $C$ independent of $\varepsilon$, $R$ and
  $\delta$ such that
        \begin{equation}
        \sup_{t\in[0,T]}\int_{\mathbb{R}^N}\rho^j(\mathrm{x},t)d\mathrm{x}\leq
        C,\label{SSIC-E3.9}
    \end{equation}
            \begin{eqnarray}
        &&\sup_{t\in[0,T]}\int_{\mathbb{R}^N}\left(
        \frac{1}{2}\rho^j|\mathrm{U}^j|^2+\frac{1}{\gamma-1}(\rho^j)^\gamma
        \right)(\mathrm{x},t)d\mathrm{x}
        +\int^T_0\int_{\mathbb{R}^N}
        (\nu_1 h(\rho^j)|\nabla
        \mathrm{U}^j|^2)(\mathrm{x},t)d\mathrm{x}dt\nonumber\\
                &&+\alpha\varepsilon\int^T_0\int_{\mathbb{R}^N}
        ((\rho^j)^\theta|\nabla
        \mathrm{U}^j|^2)(\mathrm{x},t)d\mathrm{x}dt
        \leq        C,\ \mathrm{if} \ \gamma>1,\label{SSIC-E3.10}
            \end{eqnarray}
             \begin{eqnarray}
        &&\sup_{t\in[0,T]}\int_{\mathbb{R}^N}\left(
        \frac{1}{2}\rho^j|\mathrm{U}^j|^2+\rho^j\log\rho^j-\bar{\rho}\log\bar{\rho}-(\log\bar{\rho}+1)(\rho^j-\bar{\rho})
        \right)(\mathrm{x},t)d\mathrm{x}
        \nonumber\\
                &&+\int^T_0\int_{\mathbb{R}^N}
        (\nu_1 h(\rho^j)|\nabla
        \mathrm{U}^j|^2)(\mathrm{x},t)d\mathrm{x}dt+\alpha \varepsilon\int^T_0\int_{\mathbb{R}^N}
        ((\rho^j)^\theta|\nabla
        \mathrm{U}^j|^2)(\mathrm{x},t)d\mathrm{x}dt\nonumber\\
                &\leq&        C,\ \mathrm{if} \ \gamma=1,\
                \bar{\rho}=e^{-|\mathrm{x}|},\label{SSIC-E3.13-10}
            \end{eqnarray}
     \begin{eqnarray}
        &&\sup_{t\in[0,T]}\int_{B_{\varepsilon_j,R_j}}
        \frac{1}{2}\rho^j\left|\mathrm{U}^j+\frac{2h'(\rho^j)+\theta\varepsilon(\rho^j)^{
        \theta-1}}{\rho^j}\nabla\rho^j
        \right|^2(\mathrm{x},t)d\mathrm{x}\nonumber\\
                &&+\int^T_0\int_{B_{\varepsilon_j,R_j}}
        \left(\frac{2h'(\rho^j)+\theta\varepsilon(\rho^j)^{
        \theta-1}}{\rho^j}
        \nabla\rho^j\nabla(\rho^j)^{\gamma}       \right)(\mathrm{x},t)d\mathrm{x}dt
        \leq        C.\label{SSIC-E3.13-1}
            \end{eqnarray}
Moreover, the following uniform estimate hold
     \begin{equation}
        \sup_{t\in[0,T]}
        \|\sqrt{\rho^j}\|_{H^1(B_{\varepsilon_j,R_j})}\leq
        C.\label{SSIC-E3.12}
    \end{equation}
\end{lem}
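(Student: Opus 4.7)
The plan is to establish the five estimates of Lemma \ref{SSIC-L3.1} in sequence, working first on the smooth annular solution $(\rho^{\varepsilon,R,\delta}, u^{\varepsilon,R,\delta})$ in $B_{\varepsilon,R}$, where the no-slip conditions $u(\varepsilon,t)=u(R,t)=0$ kill all boundary contributions from integration by parts, and then transferring the bounds to $(\rho^j, \mathrm{U}^j)$ via the zero extension (\ref{SSIC-E3.7})--(\ref{SSIC-E3.8}). Mass conservation (\ref{SSIC-E3.9}) is immediate: integrating (\ref{SSIC-E3.1}) over $B_{\varepsilon,R}$ yields $\frac{d}{dt}\int\rho\,d\mathrm{x}=0$, and the assumption $\varepsilon R^N\le\sqrt\varepsilon$ together with the explicit form of $\rho_{0,\varepsilon,R}$ forces the initial masses to converge to $\int\rho_0\,d\mathrm{x}$ as $j\to\infty$, hence uniformly bounded.

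For the standard energy estimates (\ref{SSIC-E3.10}) and (\ref{SSIC-E3.13-10}), I would test (\ref{SSIC-E3.2}) against $\mathrm{U}$ and use (\ref{SSIC-E3.1}) to reorganize the convective terms into $\frac{d}{dt}\int\tfrac12\rho|\mathrm{U}|^2\,d\mathrm{x}$. In the spherically symmetric case $\nabla\mathrm{U}$ is symmetric, so $D(\mathrm{U})=\nabla\mathrm{U}$, and the Lam\'e condition (\ref{SSIC-E2.7}) together with the algebraic positivity encoded by (\ref{SSIC-E3.3-1}) and (\ref{SSIC-E3.8-1}) guarantees that the resulting viscous form dominates $\nu_1 h(\rho)|\nabla\mathrm{U}|^2+\alpha\varepsilon\rho^\theta|\nabla\mathrm{U}|^2$. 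When $\gamma>1$ the pressure work is a perfect $d/dt$ of $\frac{1}{\gamma-1}\int\rho^\gamma\,d\mathrm{x}$, giving (\ref{SSIC-E3.10}) at once. For $\gamma=1$ the potential $\rho\log\rho$ is not uniformly integrable in $R$, so following \cite{Lions1998} \S5.5 I replace it by the nonnegative relative entropy
\begin{equation*}
H(\rho|\bar\rho):=\rho\log\rho-\bar\rho\log\bar\rho-(\log\bar\rho+1)(\rho-\bar\rho)\ge 0,\qquad \bar\rho(\mathrm{x}):=e^{-|\mathrm{x}|}.
\end{equation*}
A short computation using mass conservation gives $\frac{d}{dt}\int H\,d\mathrm{x}=\frac{d}{dt}\int\rho\log\rho\,d\mathrm{x}+\int\rho\mathrm{U}\cdot\nabla|\mathrm{x}|\,d\mathrm{x}$; combining this with the kinetic-energy identity (whose pressure contribution is exactly $\frac{d}{dt}\int\rho\log\rho\,d\mathrm{x}$) cancels the $\rho\log\rho$ term and leaves the single residual source $\int\rho\mathrm{U}\cdot\nabla|\mathrm{x}|\,d\mathrm{x}$, which is bounded by $\|\sqrt\rho\|_{L^2}\|\sqrt\rho\,\mathrm{U}\|_{L^2}$ and closed by Gr\"onwall using (\ref{SSIC-E3.9}), yielding (\ref{SSIC-E3.13-10}).

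For the BD entropy (\ref{SSIC-E3.13-1}), following \cite{Bresch2003-2,Mellet2007} I would compute $\tfrac12\frac{d}{dt}\int\rho\big|\mathrm{U}+\frac{2h'(\rho)+\theta\varepsilon\rho^{\theta-1}}{\rho}\nabla\rho\big|^2 d\mathrm{x}$. The essential algebraic fact is that the approximate viscosities still satisfy the BD structural relation (\ref{SSIC-E2.4}) in the form $g+(\theta-1)\varepsilon\rho^\theta = 2\rho\bigl(h+\tfrac12\varepsilon\rho^\theta\bigr)'(\rho)-2\bigl(h+\tfrac12\varepsilon\rho^\theta\bigr)$, which produces the cancellations that collapse the mixed term into a perfect time derivative plus the manifestly nonnegative dissipation $\int\frac{2h'(\rho)+\theta\varepsilon\rho^{\theta-1}}{\rho}\nabla\rho\cdot\nabla\rho^\gamma\,d\mathrm{x}$ (with controlled sign since $\gamma\ge1$). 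All integrations by parts are boundary-free because $u$ is smooth on $B_{\varepsilon_j,R_j}$ and vanishes on $\partial B_{\varepsilon_j,R_j}$. The $H^1$ bound (\ref{SSIC-E3.12}) is then an immediate consequence: $h'(\rho)\ge\nu$ by (\ref{SSIC-E2.5}) implies $|\nabla\sqrt\rho|^2=|\nabla\rho|^2/(4\rho)\le|\nabla h(\rho)|^2/(4\nu^2\rho)$, so (\ref{SSIC-E3.13-1}) combined with (\ref{SSIC-E3.9}) controls $\|\sqrt{\rho^j}\|_{H^1(B_{\varepsilon_j,R_j})}$ uniformly in $j$.

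The main obstacle is carrying out the BD computation cleanly for the perturbed pair $(h^\varepsilon,g^\varepsilon)$: one must verify that all the $\varepsilon$-terms assemble compatibly with (\ref{SSIC-E2.4}) so that no term of indefinite sign survives and the dissipation bound is uniform in $\varepsilon$. The exponent $\theta=(N-1+\alpha)/N$ and the constraint (\ref{SSIC-E3.3-1}) have been engineered precisely so that this algebra closes and so that the auxiliary quadratic form appearing in (\ref{SSIC-E3.8-1}) is negative definite. Once these identities are established on the annulus, the extension to $\mathbb{R}^N$ via (\ref{SSIC-E3.7})--(\ref{SSIC-E3.8}) is straightforward because all integrands either vanish outside $B_{\varepsilon_j,R_j}$ or reduce to explicit integrable functions of the reference density $\bar\rho$.
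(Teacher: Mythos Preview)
Your outline is correct and mirrors what the paper does: the paper gives no detailed argument for this lemma but simply cites Lemmas~3.2 and~4.1 of \cite{Guo2007} for the energy and BD-entropy identities and \cite{Lions1998}~\S5.5 for the $\gamma=1$ relative-entropy device, which is exactly the scheme you sketch (mass conservation, $L^2$ energy, BD entropy via the preserved structural relation for $(h^\varepsilon,g^\varepsilon)$, and finally $\|\nabla\sqrt\rho\|_{L^2}$ from $h'\ge\nu$). One clarification worth making: conditions (\ref{SSIC-E3.3-1}) and the quadratic form (\ref{SSIC-E3.8-1}) are not what drive the $L^2$-level coercivity you need here---they are invoked earlier, in Remark~3.2, to secure the $L^{N/(1-\alpha)}$ velocity bound and the strictly positive lower bound on $\rho$ required merely to \emph{construct} the smooth approximants; the dissipative lower bound in (\ref{SSIC-E3.10})--(\ref{SSIC-E3.13-10}) follows directly from (\ref{SSIC-E2.7}) together with the identity $1+N(\theta-1)=\alpha$ (using, in spherical symmetry, $D(\mathrm U)=\nabla\mathrm U$ and $(\mathrm{div}\,\mathrm U)^2\le N|\nabla\mathrm U|^2$).
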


From this lemma, we can obtain the following lemma.
\begin{lem}\label{SSIC-L3.2}
  The pressure $(\rho^j)^\gamma$ is bounded in $L^\frac{N+2}{N}(\mathbb{R}^N\times[0,T]
  )$ when $N\geq3$, in $L^\beta(\mathbb{R}^N\times[0,T]
  )$ for all $\beta\in[1,2)$ when $N=2$.
\end{lem}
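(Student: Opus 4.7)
The plan is to combine the Bresch--Desjardins entropy bound (\ref{SSIC-E3.13-1}) with the energy estimates in Lemma \ref{SSIC-L3.1} and standard Sobolev interpolation. First I would unpack (\ref{SSIC-E3.13-1}): since $\nabla(\rho^j)^\gamma=\gamma(\rho^j)^{\gamma-1}\nabla\rho^j$, the inequality yields
\[
\int_0^T\!\!\int_{B_{\varepsilon_j,R_j}} 2\gamma\, h'(\rho^j)(\rho^j)^{\gamma-2}|\nabla\rho^j|^2\,d\mathrm{x}\,dt\le C,
\]
and using $h'(\rho)\ge\nu$ from (\ref{SSIC-E2.5}) together with the identity $(\rho^j)^{\gamma-2}|\nabla\rho^j|^2=(4/\gamma^2)|\nabla(\rho^j)^{\gamma/2}|^2$ on $\{\rho^j>0\}$, the zero extension (\ref{SSIC-E3.7}) gives $\nabla(\rho^j)^{\gamma/2}$ bounded in $L^2(\mathbb{R}^N\times[0,T])$ uniformly in $j$. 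The $L^\gamma$ energy bound (\ref{SSIC-E3.10}) (or its $\gamma=1$ counterpart (\ref{SSIC-E3.13-10}) combined with mass conservation) places $(\rho^j)^{\gamma/2}$ in $L^\infty(0,T;L^2(\mathbb{R}^N))$, so altogether $(\rho^j)^{\gamma/2}$ lies uniformly in $L^\infty(0,T;L^2(\mathbb{R}^N))\cap L^2(0,T;H^1(\mathbb{R}^N))$.

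The second step is a Sobolev embedding followed by a H\"older interpolation against the $L^1$ mass bound (\ref{SSIC-E3.9}). For $N\ge3$, the embedding $H^1\hookrightarrow L^{2N/(N-2)}$ upgrades the previous statement to $(\rho^j)^\gamma\in L^1(0,T;L^{N/(N-2)}(\mathbb{R}^N))$, and the factorization $\gamma(N+2)/N=\gamma+2\gamma/N$ combined with H\"older's inequality in space with conjugate pair $(N/(N-2),N/2)$ gives
\[
\int_{\mathbb{R}^N}(\rho^j)^{\gamma(N+2)/N}d\mathrm{x}\le\bigl\|(\rho^j)^\gamma\bigr\|_{L^{N/(N-2)}}\cdot\bigl\|(\rho^j)^\gamma\bigr\|_{L^1}^{2/N};
\]
integrating in $t$ and using $(\rho^j)^\gamma\in L^\infty_tL^1_\mathrm{x}$ closes the $N\ge3$ case. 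For $N=2$, the embedding $H^1\hookrightarrow L^q$ for every finite $q$ yields $(\rho^j)^\gamma\in L^1(0,T;L^q(\mathbb{R}^2))$ for every $q<\infty$. Given $\beta\in[1,2)$, H\"older's inequality with the conjugate pair $(1/(2-\beta),1/(\beta-1))$ produces
\[
\int_{\mathbb{R}^2}(\rho^j)^{\gamma\beta}d\mathrm{x}\le\bigl\|(\rho^j)^\gamma\bigr\|_{L^{1/(2-\beta)}}\cdot\bigl\|(\rho^j)^\gamma\bigr\|_{L^1}^{\beta-1},
\]
and integrating in time is fine because $1/(2-\beta)<\infty$; the constant degenerates as $\beta\uparrow2$, which explains why this is exactly the borderline exponent.

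The main technical obstacle I anticipate is making the Sobolev embedding on the whole space truly uniform in $j$, since the BD entropy estimate (\ref{SSIC-E3.13-1}) lives a priori on the annulus $B_{\varepsilon_j,R_j}$ where the approximations are smooth and strictly positive. This is precisely the point where one must invoke the extension claim announced in the introduction, namely that $\nabla\sqrt{\rho^j}$ (and by the same method $\nabla(\rho^j)^{\gamma/2}$) belongs to $L^\infty(0,T;L^2(\mathbb{R}^N))$ after the zero extension (\ref{SSIC-E3.7}) with no spurious boundary contribution. Once this is in hand, the Sobolev embedding on $\mathbb{R}^N$ is applied with a constant independent of $j$ and the rest of the argument is a routine H\"older chain that treats the cases $\gamma=1$ and $\gamma>1$ on an equal footing.
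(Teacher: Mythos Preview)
Your approach uses exactly the same ingredients as the paper (the BD entropy bound for $\nabla(\rho^j)^{\gamma/2}$, the energy bound placing $(\rho^j)^{\gamma/2}$ in $L^\infty_tL^2_x$, Sobolev embedding, and H\"older interpolation), but there is a genuine gap in the order of operations. You assert that the zero extension (\ref{SSIC-E3.7}) places $\nabla(\rho^j)^{\gamma/2}$ in $L^2(\mathbb{R}^N\times[0,T])$ and then invoke the Sobolev inequality on $\mathbb{R}^N$. This is false: the approximate solutions are constructed so that $\rho^j>0$ on the closed annulus $\overline{B_{\varepsilon_j,R_j}}$ (the initial data satisfy $\rho_{0,\varepsilon,R}\ge\varepsilon$, and the lower bound is propagated), so $(\rho^j)^{\gamma/2}$ has a nonzero trace on both boundary spheres. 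Its zero extension therefore jumps across $\partial B_{\varepsilon_j,R_j}$, the distributional gradient carries a singular surface measure, and the extended function does \emph{not} lie in $H^1(\mathbb{R}^N)$. The passage you cite from the introduction concerns the \emph{limit} $\rho$ (this is precisely Proposition~\ref{SSIC-P3.3}), not the approximations $\rho^j$; it cannot be invoked here.

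The paper's fix is simply to reverse the order: it applies the Sobolev embedding $H^1\hookrightarrow L^{2N/(N-2)}$ on the annulus $B_{\varepsilon_j,R_j}$, performs the H\"older interpolation there to obtain the $L^{(N+2)/N}$ space--time bound on $B_{\varepsilon_j,R_j}$, and only \emph{then} invokes (\ref{SSIC-E3.7}) as a zero extension in $L^{(N+2)/N}$, which is harmless since Lebesgue norms are preserved by zero extension. The one point this leaves implicit is the uniformity of the Sobolev constant on the varying annuli; for the spherically symmetric functions at hand this follows by extending $(\rho^j)^{\gamma/2}(r,\cdot)$ constantly into $B_{\varepsilon_j}$ and checking via the one-dimensional radial trace inequality that the added $L^2$ mass $f(\varepsilon_j)^2|B_{\varepsilon_j}|$ is controlled by $\|f\|_{H^1(B_{\varepsilon_j,R_j})}^2$. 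With this correction in place, your H\"older computations for both $N\ge3$ and $N=2$ are correct and coincide with the paper's argument.
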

\begin{proof}
From (\ref{SSIC-E2.5}), (\ref{SSIC-E3.9})-(\ref{SSIC-E3.13-1}), we
have $(\rho^j)^{\gamma/2}$ is bounded in
$L^2(0,T;H^1(B_{\varepsilon_j,R_j}))$.

When $N\geq3$, we get $(\rho^j)^{\gamma/2}$ is bounded in
$L^2(0,T;L^\frac{2N}{N-2}(B_{\varepsilon_j,R_j}))$ or
$(\rho^j)^{\gamma}$ is bounded in
$L^1(0,T;L^\frac{N}{N-2}(B_{\varepsilon_j,R_j}))$. Since
$(\rho^j)^{\gamma}$ is bounded in
$L^\infty(0,T;L^1(B_{\varepsilon_j,R_j}))$, H\"{o}lder's
inequality implies that $(\rho^j)^{\gamma}$ is bounded in
$L^\frac{N+2}{N}(B_{\varepsilon_j,R_j}\times[0,T]
  )$. From (\ref{SSIC-E3.7}), we obtain that $(\rho^j)^{\gamma}$ is bounded in
$L^\frac{N+2}{N}(\mathbb{R}^N\times[0,T]  )$.

  Similarly, we can get that $(\rho^j)^\gamma$ is bounded in  $L^\beta(\mathbb{R}^N\times[0,T]
  )$ for all $\beta\in[1,2)$ when $N=2$.
\end{proof}

 In the
following proposition, we will estimate
$\|\rho|\mathrm{U}|^{2}\ln(1+|\mathrm{U}|^{2})\|_{L^1(\mathbb{R}^N)}$.

\begin{prop}\label{SSIC-P3.1}
  If $\nu_1 h\leq2h+Ng\leq \nu_2 h$ and $\int^{\infty}_{0}\rho_0(1+|u_0|^{2})\ln(1+|u_0|^2)r^{N-1}dr\leq C$,
  then the following estimate is true
    \begin{equation}
      \sup_{t\in[0,T]}\int^{R_j}_{\varepsilon_j}\rho^j\frac{|u^j|^{2}}{2}\ln(1+|u^j|^2)r^{N-1}dr
      \leq C\label{SSIC-E3.13}
    \end{equation}
  where $C$ is a constant independent of $\varepsilon_j$, $R_j$
  and $\delta_j$.
\end{prop}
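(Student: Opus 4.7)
The plan is to adapt the Mellet--Vasseur type estimate to the approximate radial system. I would multiply the momentum equation (\ref{SSIC-E3.4}) by $\varphi'(u^j):=u^j(1+\ln(1+|u^j|^2))$, integrate against $r^{N-1}dr$ on $[\varepsilon_j,R_j]$, and derive a Gronwall inequality for
$\mathcal{E}(t):=\int_{\varepsilon_j}^{R_j}\rho^j\,\varphi(u^j)\,r^{N-1}dr$, where $\varphi(s)=\tfrac12(1+s^2)\ln(1+s^2)$ so that $\varphi'$ matches the stated multiplier. Since $|u^j|^2\ln(1+|u^j|^2)/2\leq\varphi(u^j)$ and $\int\rho^j\,r^{N-1}dr$ is controlled by (\ref{SSIC-E3.9}), the conclusion (\ref{SSIC-E3.13}) follows from $\sup_{t\leq T}\mathcal{E}(t)\leq C$.

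The first step is to combine the test with the continuity equation (\ref{SSIC-E3.3}) multiplied by $\varphi(u^j)$ so that the time-derivative and convective contributions on the LHS assemble as a total radial derivative plus $\frac{d}{dt}\mathcal{E}(t)$; the radial derivative integrates out thanks to the Dirichlet boundary condition (\ref{SSIC-E3.6-10}). The second step is to integrate by parts the pressure, friction, and viscous terms on the RHS. The viscous and friction contributions combine, in the variables $(X,Y):=(u^j_r,u^j/r)$ with weight $r^{N-1}dr$, into a single quadratic form
$a(p+q)X^2+(N-1)b(2p+q)XY+(N-1)c\,pY^2$,
where $p=1+\ln(1+|u^j|^2)$, $q=2|u^j|^2/(1+|u^j|^2)$, and $a,b,c$ are elementary linear combinations of $h(\rho^j),g(\rho^j),\theta\varepsilon(\rho^j)^\theta$ (at leading order $ac-(N-1)b^2=2h(2h+Ng)+O(\varepsilon)$). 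The ellipticity $\nu_1 h\leq 2h+Ng\leq\nu_2 h$, sharpened by the quantitative (\ref{SSIC-E3.3-1}), guarantees enough margin in $ac-(N-1)b^2$ to absorb the $q^2$-excess forced by the log-weight, so the form is uniformly positive in $(X,Y)$ and can be moved to the LHS as a new dissipation $\mathcal{D}$.

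The remaining RHS terms are: (i) pressure pieces $\int\rho^\gamma\varphi''(u^j)u^j_r\,r^{N-1}dr$ and $(N-1)\int\rho^\gamma\varphi'(u^j)r^{N-2}dr$ from IBP on $-(\rho^\gamma)_r\varphi'(u^j)r^{N-1}$; and (ii) friction-type terms $-(N-1)\int(2h+\varepsilon\rho^\theta)_r u^j\varphi'(u^j)r^{N-2}dr$ controlled by $\partial_r h(\rho^j)$. For (i), the $u_r$-factor is absorbed into $\mathcal{D}$ via Young's inequality at the cost of $\rho^{2\gamma}\varphi''(u^j)/h(\rho^j)\,r^{N-1}$; this is integrable in $(t,x)$ by Lemma \ref{SSIC-L3.2} together with the growth hypothesis (\ref{SSIC-E2.8}) on $h(\rho)$ and the pointwise bound $\varphi''(u)\leq 3+2\varphi(u)/(1+u^2)$, which feeds the Gronwall loop via the $\rho\varphi(u)$ contribution. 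The second pressure piece is handled by splitting $\rho^\gamma|u^j|=\sqrt{\rho^j}\,u^j\cdot(\rho^j)^{\gamma-1/2}$ and invoking $\sqrt{\rho^j}\,u^j\in L^\infty_tL^2$ from (\ref{SSIC-E3.10}). For (ii), Cauchy--Schwarz against the BD-type bound (\ref{SSIC-E3.13-1}) on $h'(\rho^j)\partial_r\rho^j/\sqrt{\rho^j}$ is sufficient, the complementary factor being controlled by $\mathcal{D}$ or by $\mathcal{E}$.

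The hard part will be verifying uniform coercivity of the weighted viscous quadratic form: the off-diagonal weight $2p+q$ exceeds the geometric mean of the diagonal weights $p+q$ and $p$ whenever $q>0$, so the flat condition $ac\geq(N-1)b^2$ is insufficient; one must use the precise quantitative choice of $\nu_1,\nu_2,\alpha$ in (\ref{SSIC-E3.3-1}) to obtain a uniform strict margin $ac\geq(N-1)b^2+\mu h(2h+Ng)$ absorbing the $q^2$-excess independently of $j$. A secondary subtlety will arise in the borderline pressure regimes (either $\gamma\geq N/(N-2)$ for $N\geq 3$, or $\gamma=1$ with $N=2$), where the pure Lemma \ref{SSIC-L3.2} bound on $\rho^\gamma$ is marginal and one must interpolate carefully against $\sqrt{\rho^j}\in L^\infty_tH^1$ (from (\ref{SSIC-E3.12})) and the growth assumption (\ref{SSIC-E2.8}) to recover the integrability required to close the Gronwall loop.
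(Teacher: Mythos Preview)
Your overall strategy (multiply by $u^j(1+\ln(1+|u^j|^2))$, integrate, and control the pressure against the weighted dissipation) matches the paper, but two of your key steps do not work as written.

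\textbf{The viscous quadratic form.} You claim that coercivity of the $(X,Y)$-form with weights $p+q$, $2p+q$, $p$ requires the quantitative margin from (\ref{SSIC-E3.3-1}). The paper does not need this. It separates the $p$-weighted and $q$-weighted contributions: the $p$-weighted part is coercive from the hypothesis $\nu_1 h\le 2h+Ng\le \nu_2 h$ alone (yielding $\ge (\nu_1 h+\alpha\varepsilon(\rho^j)^\theta)(1+\ln)(X^2+(N-1)Y^2)$), while the $q$-weighted terms, since $q=\frac{2|u|^2}{1+|u|^2}\le 2$, are simply bounded by $C\int (h+\varepsilon(\rho^j)^\theta)(X^2+(N-1)Y^2)\,r^{N-1}dr$, which is already controlled by the standard energy dissipation (\ref{SSIC-E3.10})--(\ref{SSIC-E3.13-10}). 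No delicate discriminant analysis is needed, and (\ref{SSIC-E3.3-1}) plays no role in this proposition. Relatedly, once the friction term $(2h+\varepsilon\rho^\theta)_r(N-1)u/r$ is integrated by parts, it is absorbed into the quadratic form; your separate item (ii) involving $\partial_r h(\rho^j)$ and the BD estimate is double-counting.

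\textbf{The pressure term and the Gronwall loop.} This is the real gap. After Young's inequality the remainder is (up to constants) $\int (\rho^j)^{2\gamma}(1+\ln(1+|u^j|^2))/h(\rho^j)\,r^{N-1}dr$. Your plan is to use $\varphi''(u)\le 3+2\varphi(u)/(1+u^2)$ and feed the $\varphi(u)$ piece into a Gronwall inequality for $\mathcal{E}$. But $2\varphi(u)/(1+u^2)=\ln(1+u^2)$, so the remaining bad term is $\int (\rho^j)^{2\gamma}\ln(1+|u^j|^2)/h\,r^{N-1}dr$, and bounding this by $C(t)\,\mathcal{E}(t)$ would require $\sup_x (\rho^j)^{2\gamma-1}/h(\rho^j)\in L^1_t$ \emph{uniformly in $j$}, which you do not have (the approximate densities are not uniformly bounded near $r=\varepsilon_j$). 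The paper avoids Gronwall altogether: it applies H\"older with exponents $\frac{2}{2-\delta},\frac{2}{\delta}$ to split the remainder as
\[
\Bigl(\int \Bigl(\tfrac{(\rho^j)^{2\gamma-\delta/2}}{h}\Bigr)^{\frac{2}{2-\delta}}r^{N-1}dr\Bigr)^{\frac{2-\delta}{2}}
\Bigl(\int \rho^j(1+\ln(1+|u^j|^2))^{2/\delta}\,r^{N-1}dr\Bigr)^{\delta/2}.
\]
The second factor is bounded by the \emph{basic} energy $\int\rho^j(1+|u^j|^2)$ (since $(\ln)^{2/\delta}$ grows sub-polynomially), not by $\mathcal{E}$; the first factor is handled by $h\ge\nu\rho$ and Lemma \ref{SSIC-L3.2} when $\gamma<N/(N-2)$, or by the growth hypothesis (\ref{SSIC-E2.8}) otherwise. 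This gives (\ref{SSIC-E3.13}) directly, with no recursion on $\mathcal{E}$.
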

\begin{proof}
   Multiplying (\ref{SSIC-E3.4}) by
  $r^{N-1}u^j(1+\ln(1+|u^j|^2))$, integrating the resulting equation and using (\ref{SSIC-E3.3}) yield
    \begin{eqnarray*}
      &&\frac{d}{dt}\int^{R_j}_{\varepsilon_j}\rho^j\frac{1+|u^j|^2}{2}\ln(1+|u^j|^2)r^{N-1}dr
        \nonumber\\
                &&+\int^{R_j}_{\varepsilon_j}(2h+\varepsilon(\rho^j)^\theta)(1+\ln(1+|u^j|^2))(
                (u^j)_r^2+(N-1)\frac{(u^j)^2}{r^2})      r^{N-1}dr\nonumber\\
                &&
        +\int^{R_j}_{\varepsilon_j}(2h+\varepsilon(\rho^j)^\theta)\frac{2(u^j)^2}{1+|u^j|^2}
                (u^j)_r^2   r^{N-1}dr\nonumber\\
                &&+\int^{R_j}_{\varepsilon_j}(g+(\theta-1)\varepsilon(\rho^j)^\theta)(1+\ln(1+|u^j|^2))(
                (u^j)_r+(N-1)\frac{u^j}{r})^2      r^{N-1}dr\nonumber\\
                &&
        +\int^{R_j}_{\varepsilon_j}(g+(\theta-1)\varepsilon(\rho^j)^\theta)\frac{2(u^j)^2}{1+|u^j|^2}
                (u^j)_r((u^j)_r+(N-1)\frac{u_j}{r})
                r^{N-1}dr\nonumber\\
                    &&
                +\int^{R_j}_{\varepsilon_j}((\rho^j)^\gamma)_r(1+\ln(1+|u^j|^2))u^j
                r^{N-1}dr=0.
    \end{eqnarray*}
Since $\nu_1 h\leq2h+Ng\leq\nu_2 h$ and
$(1+N(\theta-1))\varepsilon(\rho^j)^\theta=\alpha\varepsilon(\rho^j)^\theta$,
we have
     \begin{eqnarray}
       &&\frac{d}{dt}\int^{R_j}_{\varepsilon_j}\rho^j\frac{1+|u^j|^2}{2}\ln(1+|u^j|^2)r^{N-1}dr
        \nonumber\\
        &&+\int^{R_j}_{\varepsilon_j}(\nu_1 h+\alpha\varepsilon(\rho^j)^\theta)(1+\ln(1+|u^j|^2))(
                (u^j)_r^2+(N-1)\frac{(u^j)^2}{r^2})      r^{N-1}dr\nonumber\\
                    &&
                +\int^{R_j}_{\varepsilon_j}((\rho^j)^\gamma)_r(1+\ln(1+|u^j|^2))u^j
                r^{N-1}dr\nonumber\\
                &\leq&C \int^{R_j}_{\varepsilon_j}( h+\varepsilon(\rho^j)^\theta)(
                (u^j)_r^2+(N-1)\frac{(u^j)^2}{r^2})
                r^{N-1}dr.\label{SSIC-E3.16-1}
    \end{eqnarray}
Using integration by parts and Young's inequality, we have
    \begin{eqnarray*}
      && \left|\int^{R_j}_{\varepsilon_j}((\rho^j)^\gamma)_r(1+\ln(1+|u^j|^2))u^j
                r^{N-1}dr
      \right|\\
                    &\leq&C\int^{R_j}_{\varepsilon_j}
                    |u^j_r|(1+\ln(1+|u^j|^2))(\rho^j)^\gamma r^{N-1}dr
                    +C\int^{R_j}_{\varepsilon_j}
                  |u^j|(1+\ln(1+|u^j|^2))(\rho^j)^\gamma r^{N-2}dr\\
                        &\leq&\frac{\nu_1}{2}\int^{R_j}_{\varepsilon_j}
                         h(1+\ln(1+|u^j|^2))(
                (u^j)_r^2+(N-1)\frac{(u^j)^2}{r^2})      r^{N-1}dr\nonumber\\
                    &&
                        +C\int^{R_j}_{\varepsilon_j}h^{-1}(\rho^j)^{2\gamma}(1+\ln(1+|u^j|^2))
                        r^{N-1}dr\\
    &\leq&\frac{\nu_1}{2}\int^{R_j}_{\varepsilon_j}
                         h(1+\ln(1+|u^j|^2))(
                (u^j)_r^2+(N-1)\frac{(u^j)^2}{r^2})      r^{N-1}dr\nonumber\\
                    &&
                        +C\left(\int^{R_j}_{\varepsilon_j}\left(\frac{(\rho^j)^{
                        2\gamma-\frac{\delta}{2}}}{h}\right)^{\frac{2}{2-\delta}}
                        r^{N-1}dr\right)^{\frac{2-\delta}{2}}
                        \left(\int^{R_j}_{\varepsilon_j}\rho^j(1+\ln(1+|u^j|^2))^{\frac{2}{\delta}}
                        r^{N-1}dr\right)^{\frac{\delta}{2}}.
    \end{eqnarray*}
Combining it with (\ref{SSIC-E3.9})-(\ref{SSIC-E3.13-10}) and
(\ref{SSIC-E3.16-1}), we get
    \begin{eqnarray}
      &&\sup_{t\in[0,T]}\int^{R_j}_{\varepsilon_j}\rho^j\frac{1+|u^j|^2}{2}\ln(1+|u^j|^2)r^{N-1}dr
      \nonumber\\
      &\leq& C+ C_\delta\left(\int^{R_j}_{\varepsilon_j}\left(\frac{(\rho^j)^{
                        2\gamma-\frac{\delta}{2}}}{h}\right)^{\frac{2}{2-\delta}}
                        r^{N-1}dr\right)^{\frac{2-\delta}{2}}.\label{SSIC-E3.17-1}
    \end{eqnarray}
From (\ref{SSIC-E2.5}), we have $h\geq\nu \rho$ and
    $$
\left(\int^{R_j}_{\varepsilon_j}\left(\frac{(\rho^j)^{
                        2\gamma-\frac{\delta}{2}}}{h}\right)^{\frac{2}{2-\delta}}
                        r^{N-1}dr\right)^{\frac{2-\delta}{2}}
                        \leq C\left(\int^{R_j}_{\varepsilon_j}\left((\rho^j)^{
                        2\gamma-1-\frac{\delta}{2}}\right)^{\frac{2}{2-\delta}}
                        r^{N-1}dr\right)^{\frac{2-\delta}{2}}.
    $$
Then, using Lemma \ref{SSIC-L3.2}, we check that the right hand
side is bounded $L^1$ in time for some small $\delta$, without any
condition when $N=2$, and when $N\geq3$ under the condition that
    $$2\gamma-1<\frac{N+2}{N}\gamma,$$
which gives rise to the restriction $\gamma<\frac{N}{N-2}$. In
either cases, we have
    $$
\sup_{t\in[0,T]}\int^{R_j}_{\varepsilon_j}\rho^j\frac{1+|u^j|^2}{2}\ln(1+|u^j|^2)r^{N-1}dr
\leq C.
    $$

When $N\geq3$ and $\gamma\geq \frac{N}{N-2}$, we need the extra
hypothesis (\ref{SSIC-E2.8}) to show that the right hand side of
(\ref{SSIC-E3.17-1}) is bounded and to obtain the same result.
\end{proof}

From (\ref{SSIC-E3.7})-(\ref{SSIC-E3.8}), we deduce that
\begin{cor}\label{SSIC-C3.1}
    \begin{equation}
      \sup_{t\in[0,T]}\int_{\mathbb{R}^N}\rho^j\frac{|\mathrm{U}^j|^{2}}{2}\ln(1+|\mathrm{U}^j|^2)d\mathrm{x}
      \leq C.
    \end{equation}
\end{cor}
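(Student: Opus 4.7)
The plan is to reduce the Cartesian integral on $\mathbb{R}^N$ to the radial integral on $[\varepsilon_j,R_j]$ that is already controlled by Proposition \ref{SSIC-P3.1}. Since $\rho^j$ and $\mathrm{U}^j$ are spherically symmetric with $\rho^j(\mathrm{x},t)=\rho^j(r,t)$ and $\mathrm{U}^j(\mathrm{x},t)=u^j(r,t)\frac{\mathrm{x}}{r}$, one has $|\mathrm{U}^j(\mathrm{x},t)|=|u^j(r,t)|$, so the integrand $\rho^j\frac{|\mathrm{U}^j|^{2}}{2}\ln(1+|\mathrm{U}^j|^2)$ depends only on $r=|\mathrm{x}|$.

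Next, by the zero extensions (\ref{SSIC-E3.7})-(\ref{SSIC-E3.8}), both $\rho^j$ and $u^j$ vanish outside the annulus $\{\varepsilon_j\le r\le R_j\}$, so the integrand is supported in $B_{\varepsilon_j,R_j}$. Passing to spherical coordinates yields
$$
\int_{\mathbb{R}^N}\rho^j\frac{|\mathrm{U}^j|^{2}}{2}\ln(1+|\mathrm{U}^j|^2)\,d\mathrm{x}
=\omega_{N-1}\int_{\varepsilon_j}^{R_j}\rho^j\frac{|u^j|^{2}}{2}\ln(1+|u^j|^2)\,r^{N-1}dr,
$$
where $\omega_{N-1}$ denotes the surface area of the unit sphere in $\mathbb{R}^N$. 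Taking the supremum over $t\in[0,T]$ and invoking the bound (\ref{SSIC-E3.13}) of Proposition \ref{SSIC-P3.1} (whose constant is independent of $\varepsilon_j$, $R_j$, $\delta_j$) then yields the claim, with $C$ replaced by $\omega_{N-1}C$.

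There is no real obstacle here, since the corollary is essentially a coordinate restatement of Proposition \ref{SSIC-P3.1}. The only items worth verifying are that the zero extensions produce a measurable function on $\mathbb{R}^N$ (immediate from spherical symmetry and the piecewise definition), and that no boundary contribution arises at $r=\varepsilon_j$ or $r=R_j$ in the change of variables (automatic, as $\rho^j u^j=0$ there by the boundary condition (\ref{SSIC-E3.6-10})). Accordingly, the write-up can be kept to a single line invoking spherical coordinates plus (\ref{SSIC-E3.13}).
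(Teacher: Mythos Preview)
Your proposal is correct and matches the paper's own argument: the paper simply states that the corollary follows from the zero extensions (\ref{SSIC-E3.7})--(\ref{SSIC-E3.8}) applied to Proposition~\ref{SSIC-P3.1}, which is exactly the spherical-coordinates reduction you wrote out.
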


\begin{prop}\label{SSIC-P3.2}
The sequence $\{\rho^j\}$ is bounded in
$L^\infty(0,T;L^\frac{N}{N-2}(\mathbb{R}^N))$ when $N\geq3$, or
$L^\infty(0,T;L^q(\mathbb{R}^2))$ for all $q\geq1$. There exists a
subsequence of $\{\rho^j\}$, still denoted by itself, such that
    \begin{equation}
      \rho^j(\mathrm{x},t)\rightarrow
            \rho(\mathrm{x},t),\label{SSIC-E3.16}
    \end{equation}
strongly in $C([0,T];L^\beta_{\mathrm{loc}}(\mathbb{R}^N))$,
$\beta\in[1,\frac{N}{N-2})$, as $j\rightarrow\infty$. Here,
$\rho\in L^\infty(0,T;L^1\cap L^\frac{N}{N-2}(\mathbb{R}^N))$ when
$N\geq3$, or $\rho\in L^\infty(0,T;L^q(\mathbb{R}^2))$ for all
$q\geq1$. Moreover, $\rho(\mathrm{x},t)=\rho(r,t)$ is a
spherically symmetric function.
\end{prop}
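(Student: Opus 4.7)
The plan is to combine the spatial regularity from Lemma \ref{SSIC-L3.1} with the continuity equation (\ref{SSIC-E3.1}) via an Aubin--Lions argument. First I would upgrade the $H^1$ bound on $\sqrt{\rho^j}$ to a uniform $L^p$ bound on $\rho^j$ on the whole space; second, I would control $\partial_t\rho^j$ through the mass equation; third, I would apply Aubin--Lions on an exhausting sequence of compact sets and extract by diagonal argument; finally, spherical symmetry of the limit is inherited from strong $L^1_{\mathrm{loc}}$ convergence.

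For the uniform integrability bound, (\ref{SSIC-E3.12}) gives $\sqrt{\rho^j}\in L^\infty(0,T;H^1(B_{\varepsilon_j,R_j}))$, while (\ref{SSIC-E3.9}) gives $\sqrt{\rho^j}\in L^\infty(0,T;L^2(\mathbb{R}^N))$, both uniformly in $j$. For $N\geq 3$, the Sobolev embedding $H^1\hookrightarrow L^{2N/(N-2)}$ combined with the zero extension (\ref{SSIC-E3.7}) yields $\rho^j$ uniformly bounded in $L^\infty(0,T;L^{N/(N-2)}(\mathbb{R}^N))$. For $N=2$, I would use the radial representation (\ref{SSIC-E3.3}) to reduce the statement to a one--dimensional estimate and conclude, from the $H^1$--bound together with the $L^1$--bound, that $\sqrt{\rho^j}$ is uniformly bounded in $L^q(\mathbb{R}^2)$ for every $q<\infty$, hence the claim on $\rho^j$.

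For the strong compactness, the continuity equation (\ref{SSIC-E3.1}) gives $\partial_t\rho^j=-\mathrm{div}(\rho^j\mathrm{U}^j)$. Writing $\rho^j\mathrm{U}^j=\sqrt{\rho^j}\cdot(\sqrt{\rho^j}\mathrm{U}^j)$ and combining $\sqrt{\rho^j}\mathrm{U}^j\in L^\infty(0,T;L^2)$ from (\ref{SSIC-E3.10})--(\ref{SSIC-E3.13-10}) with the $L^{2N/(N-2)}$--bound on $\sqrt{\rho^j}$ just obtained, H\"older's inequality yields $\rho^j\mathrm{U}^j$ bounded in $L^\infty(0,T;L^{N/(N-1)}(\mathbb{R}^N))$. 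Hence $\partial_t\rho^j$ is bounded in $L^\infty(0,T;W^{-1,N/(N-1)}_{\mathrm{loc}}(\mathbb{R}^N))$. On any compact $K\subset\mathbb{R}^N$, $\rho^j$ is uniformly bounded in $L^\infty(0,T;L^{N/(N-2)}(K))$, which embeds compactly into $W^{-1,q}(K)$ for suitable $q$; the Aubin--Lions lemma then delivers strong convergence of (a subsequence of) $\rho^j$ in $C([0,T];W^{-1,q}(K))$, and interpolation with the $L^{N/(N-2)}$--bound upgrades it to strong convergence in $C([0,T];L^\beta(K))$ for any $\beta\in[1,N/(N-2))$. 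A diagonal extraction over a sequence $K_n\uparrow\mathbb{R}^N$ produces the subsequence in (\ref{SSIC-E3.16}). Spherical symmetry passes to the limit because each $\rho^j$ is radial and this property is closed under $L^1_{\mathrm{loc}}$--convergence (test against arbitrary rotations).

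The main technical obstacle I anticipate is keeping the Sobolev--type constants uniform in $j$ as the annulus $B_{\varepsilon_j,R_j}$ exhausts $\mathbb{R}^N$; the remedy is to work directly with the zero extensions (\ref{SSIC-E3.7})--(\ref{SSIC-E3.8}) and apply Gagliardo--Nirenberg on the whole $\mathbb{R}^N$, whose constant depends only on the dimension, after controlling the boundary jump of $\sqrt{\rho^j}$ at $r=\varepsilon_j$ and $r=R_j$ (these are small thanks to the admissibility condition $\varepsilon_j R_j^N\le\sqrt{\varepsilon_j}$ and the construction of the approximate initial data). A second mild subtlety is that the Aubin--Lions argument above compactifies only locally in $\mathbb{R}^N$, which is exactly why the statement is phrased in $L^\beta_{\mathrm{loc}}$; a uniform tail estimate at infinity is not expected nor needed here.
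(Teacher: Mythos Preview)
Your compactness argument has a genuine gap at the interpolation step. Strong convergence in $C([0,T];W^{-1,q}(K))$ together with a uniform bound in $L^\infty(0,T;L^{N/(N-2)}(K))$ does \emph{not} imply strong convergence in $C([0,T];L^\beta(K))$ for any $\beta\geq 1$: the sequence $f_n(x,t)=e^{in\cdot x}$ on a cube is bounded in every $L^p$, tends to $0$ in $W^{-1,q}$, yet has constant positive $L^\beta$-norm. Interpolation between a negative Sobolev space and a Lebesgue space does not produce a Lebesgue space in the middle, so you cannot upgrade the convergence this way. What is missing is spatial compactness in a \emph{positive}-order space for $\rho^j$ itself.

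The paper supplies exactly that missing ingredient: from $\nabla\rho^j=2\sqrt{\rho^j}\,\nabla\sqrt{\rho^j}$ and the bounds on $\sqrt{\rho^j}$ in $L^{2N/(N-2)}$ and $\nabla\sqrt{\rho^j}$ in $L^2$, one gets $\nabla\rho^j$ bounded in $L^\infty(0,T;L^{N/(N-1)}(B_{1/k,n}))$ for $\varepsilon_j\le 1/k$, $R_j\ge n$. Aubin--Lions with the triple $W^{1,N/(N-1)}\hookrightarrow L^{N/(N-1)}\hookrightarrow W^{-1,N/(N-1)}$ then yields $\rho^j\to\rho$ in $C([0,T];L^{N/(N-1)}(B_{1/k,n}))$ directly, after which ordinary $L^p$--interpolation against the $L^{N/(N-2)}$ bound covers all $\beta<N/(N-2)$. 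The passage from the annulus $B_{1/k,n}$ to the full ball $B_n$ uses that $\|\rho^j\|_{L^{N/(N-1)}(B_{1/k})}\le C k^{-1}\|\rho^j\|_{L^{N/(N-2)}}$ is small. A second, smaller issue: the zero extension of $\sqrt{\rho^j}$ is \emph{not} in $H^1(\mathbb{R}^N)$, since the jump across $\{|x|=\varepsilon_j\}$ and $\{|x|=R_j\}$ contributes a surface measure to the distributional gradient, which is never in $L^2$; controlling the size of the jump does not cure this. That is precisely why the paper confines all gradient estimates to annuli $B_{1/k,n}$ contained in $B_{\varepsilon_j,R_j}$, rather than applying Gagliardo--Nirenberg on $\mathbb{R}^N$.
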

\begin{proof} We only consider the case $N\geq3$, since the
proof of the case that $N=2$ is similar.

It follows from (\ref{SSIC-E3.7}) and (\ref{SSIC-E3.12}) that
$\{\sqrt{\rho^j}\}$ is bounded in
$L^\infty(0,T;L^q(\mathbb{R}^N))$ for $q\in[2,\frac{2N}{N-2}]$.
Thus, $\{\rho^j\}$ is bounded in
$L^\infty(0,T;L^\frac{N}{N-2}(\mathbb{R}^N))$ and
$\{\rho^j\mathrm{U}^j\}$ is bounded in
$L^\infty(0,T;L^\frac{N}{N-1}(\mathbb{R}^N))$ due to
(\ref{SSIC-E3.10})-(\ref{SSIC-E3.13-10}). Then, the continuity
equation yields that
$\{\partial_t\rho^j\}_{\varepsilon_j\leq\frac{1}{k},R_j\geq {n}}$
is bounded in
$L^\infty(0,T;W^{-1,\frac{N}{N-1}}(B_{\frac{1}{k},{n}}))$, for any
$k\geq n^{2N}$. Moreover, since $\nabla
\rho^j=2\sqrt{\rho^j}\nabla \sqrt{\rho^j}$, we have
$\{\nabla\rho^j\}_{\varepsilon_j\leq\frac{1}{k},R_j\geq {n}}$ is
bounded in $L^\infty(0,T;L^{\frac{N}{N-1}}(B_{\frac{1}{k},{n}}))$.
From the Aubin-Lions lemma, we get
    \begin{equation}
      \rho^j(\mathrm{x},t)\rightarrow
            \rho(\mathrm{x},t),
            \ \textrm{strongly in }C([0,T];L^\frac{N}{N-1}(B_{\frac{1}{k},{n}})),
            \ \textrm{as }j\rightarrow\infty.
    \end{equation}
 Since
    $$
    \|\rho^j-\rho\|_{L^\infty([0,T];L^\frac{N}{N-1}(B_{{n}}))}
    \leq   \frac{C}{k}  \|\rho^j-\rho\|_{L^\infty([0,T];L^\frac{N}{N-2}(B_{\frac{1}{k}}))}
    +
    \|\rho^j-\rho\|_{L^\infty([0,T];L^\frac{N}{N-1}(B_{\frac{1}{k},{n}}))},
    $$
 we get
    \begin{equation}
      \rho^j(\mathrm{x},t)\rightarrow
            \rho(\mathrm{x},t),
            \ \textrm{strongly in }C([0,T];L^\frac{N}{N-1}(B_{{n}})),
            \ \textrm{as }j\rightarrow\infty.\label{SSIC-E3.20}
    \end{equation}
 Clearly, (\ref{SSIC-E3.16}) holds and $\rho(\mathrm{x},t)$ is
spherically symmetric.
\end{proof}

From (\ref{SSIC-E3.7}), Lemma \ref{SSIC-L3.2} and Proposition
\ref{SSIC-P3.2}, we immediately obtain the following lemma.

\begin{lem}\label{SSIC-L3.3}
 There exists a
subsequence of $\{\rho^j\}$, still denoted by itself, such that
    \begin{equation}
      (\rho^j)^\gamma(\mathrm{x},t)\rightarrow
            \rho^\gamma(\mathrm{x},t),
    \end{equation}
strongly in $L^1_{loc}(\mathbb{R}^N\times[0,T])$, as
$j\rightarrow\infty$.
\end{lem}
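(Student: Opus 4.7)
The plan is to combine the strong $L^1_{\mathrm{loc}}$ convergence of $\rho^j$ from Proposition \ref{SSIC-P3.2} with the higher $L^p$ integrability of $(\rho^j)^\gamma$ from Lemma \ref{SSIC-L3.2}, and then invoke a standard Vitali-type argument. Strong $L^1$ convergence together with a uniform bound in $L^p$ for some $p>1$ implies strong $L^q$ convergence for every $q\in[1,p)$ on bounded sets, which is exactly the kind of improvement needed here.

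First, from Proposition \ref{SSIC-P3.2}, after passing to a further subsequence, we may assume $\rho^j(\mathrm{x},t)\to \rho(\mathrm{x},t)$ pointwise a.e.\ on $\mathbb{R}^N\times[0,T]$, since strong convergence in $C([0,T];L^\beta_{\mathrm{loc}}(\mathbb{R}^N))$ implies a.e.\ convergence along a subsequence. Consequently, by continuity of the map $s\mapsto s^\gamma$ on $[0,\infty)$, we have
\begin{equation*}
(\rho^j)^\gamma(\mathrm{x},t)\longrightarrow \rho^\gamma(\mathrm{x},t)\quad\text{a.e.\ in }\mathbb{R}^N\times[0,T].
\end{equation*}

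Next, Lemma \ref{SSIC-L3.2} provides a uniform bound for $(\rho^j)^\gamma$ in $L^{(N+2)/N}(\mathbb{R}^N\times[0,T])$ when $N\geq 3$, and in $L^\beta(\mathbb{R}^N\times[0,T])$ for some (any) $\beta\in(1,2)$ when $N=2$. In both cases the exponent is strictly greater than $1$, so for any bounded measurable set $K\subset\mathbb{R}^N\times[0,T]$ and any measurable $E\subset K$, H\"older's inequality gives
\begin{equation*}
\int_E (\rho^j)^\gamma\, d\mathrm{x}\,dt \;\leq\; \|(\rho^j)^\gamma\|_{L^{p}(K)}\,|E|^{1-1/p}
\end{equation*}
with $p>1$ independent of $j$. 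This shows that $\{(\rho^j)^\gamma\}$ is equi-integrable on every bounded subset of $\mathbb{R}^N\times[0,T]$.

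Equi-integrability together with a.e.\ convergence allows us to apply Vitali's convergence theorem on every bounded set $K\subset \mathbb{R}^N\times[0,T]$, yielding $(\rho^j)^\gamma\to\rho^\gamma$ strongly in $L^1(K)$; since $K$ is arbitrary, we obtain strong convergence in $L^1_{\mathrm{loc}}(\mathbb{R}^N\times[0,T])$. There is no real obstacle here beyond identifying the correct exponent $p>1$; the argument is essentially routine once the $L^p$ bound of Lemma \ref{SSIC-L3.2} is in hand, and it is precisely the reason the improved integrability of the pressure was established in the previous lemma.
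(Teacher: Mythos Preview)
Your proof is correct and is precisely the detailed version of what the paper intends: the paper simply states that the lemma follows ``immediately'' from the zero extension \eqref{SSIC-E3.7}, Lemma~\ref{SSIC-L3.2}, and Proposition~\ref{SSIC-P3.2}, and your Vitali argument (a.e.\ convergence from Proposition~\ref{SSIC-P3.2} plus equi-integrability from the $L^p$, $p>1$, bound of Lemma~\ref{SSIC-L3.2}) is exactly the standard way to unpack that sentence.
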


\begin{prop}\label{SSIC-P3.3}
For any $k\geq n^{2N}$, there exists a subsequence of
$\{\rho^j\}_{\varepsilon_j\leq\frac{1}{k},R_j\geq {n}}$, still
denoted by itself, such that
    \begin{equation*}
      \nabla\sqrt{\rho^j(\mathrm{x},t)}\stackrel{\ast}{\rightharpoonup}
            \nabla\sqrt{\rho(\mathrm{x},t)},
            \  \textrm{ weak-}\ast \textrm{ in }
                  L^\infty([0,T],L^{2}(B_{\frac{1}{k},{n}})),
    \end{equation*}
     \begin{equation*}
      \nabla\bar{h}(\rho^j(\mathrm{x},t))\stackrel{\ast}{\rightharpoonup}
            \nabla\bar{h}(\rho(\mathrm{x},t)),
            \  \textrm{ weak-}\ast \textrm{ in }
                  L^\infty([0,T],L^{2}(B_{\frac{1}{k},{n}})),
    \end{equation*}
as $j\rightarrow\infty$, where $\bar{h}$ satisfies $\bar{h}(0)=0$
and $\bar{h}'(s)=\frac{h'(s)}{\sqrt{s}}$. Moreover,
$\nabla\sqrt{\rho}\in L^\infty([0,T],L^{2}(\mathbb{R}^N))$ and
$\nabla \bar{h}(\rho)\in L^\infty([0,T],L^{2}(\mathbb{R}^N))$.
\end{prop}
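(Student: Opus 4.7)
The plan is to follow the standard compactness route: first extract weak-$*$ limits from the uniform a priori bounds furnished by Lemma \ref{SSIC-L3.1}, then identify those limits via the strong convergence of Proposition \ref{SSIC-P3.2}, and finally upgrade the local bounds on annuli to a global estimate on $\mathbb{R}^N$.

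For the uniform bounds, the estimate for $\nabla\sqrt{\rho^j}$ in $L^\infty(0,T;L^2(B_{1/k,n}))$ is immediate from (\ref{SSIC-E3.12}) for all $j$ with $\varepsilon_j\le 1/k$ and $R_j\ge n$. For $\nabla\bar h(\rho^j)$, I use the chain-rule identity
$$\nabla\bar h(\rho^j)=\bar h'(\rho^j)\nabla\rho^j=\frac{h'(\rho^j)}{\sqrt{\rho^j}}\nabla\rho^j=\frac{\nabla h(\rho^j)}{\sqrt{\rho^j}}.$$
The triangle inequality in the weighted space $L^2(\rho^j d\mathrm{x})$, applied to (\ref{SSIC-E3.13-1}) together with (\ref{SSIC-E3.10}) (or (\ref{SSIC-E3.13-10}) when $\gamma=1$), gives
$$\sup_{t\in[0,T]}\int_{B_{\varepsilon_j,R_j}}\rho^j\left|\frac{2h'(\rho^j)+\theta\varepsilon(\rho^j)^{\theta-1}}{\rho^j}\nabla\rho^j\right|^2 d\mathrm{x}\le C.$$
Since both coefficients of $\nabla\rho^j$ inside the bracket are nonnegative, expanding the square and discarding the nonnegative cross term yields $4|\nabla\bar h(\rho^j)|^2\le\rho^j|\cdot|^2$ pointwise, hence $\sup_t\|\nabla\bar h(\rho^j)\|_{L^2(B_{\varepsilon_j,R_j})}\le C$ uniformly in $j$.

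For weak-$*$ compactness and identification, Banach--Alaoglu on each fixed $B_{1/k,n}$ gives weak-$*$ limits $\xi_1,\xi_2$ of (sub)sequences of $\nabla\sqrt{\rho^j}$ and $\nabla\bar h(\rho^j)$ in $L^\infty(0,T;L^2(B_{1/k,n}))$, and a Cantor diagonal procedure produces a single subsequence working for every admissible pair $(k,n)$. Proposition \ref{SSIC-P3.2} supplies, along a further subsequence, pointwise a.e.\ convergence $\rho^j\to\rho$; together with the continuity of $\sqrt{\cdot}$ and of $\bar h$, and the uniform $L^\infty_tL^q_x$ bounds on $\rho^j$, Vitali's theorem yields $\sqrt{\rho^j}\to\sqrt{\rho}$ and $\bar h(\rho^j)\to\bar h(\rho)$ strongly in $L^1_{\mathrm{loc}}(\mathbb{R}^N\times[0,T])$. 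Testing $\partial_i\sqrt{\rho^j}$ and $\partial_i\bar h(\rho^j)$ against $\phi\in C^\infty_c(B_{1/k,n}\times(0,T))$ and passing to the distributional limit identifies $\xi_1=\nabla\sqrt{\rho}$ and $\xi_2=\nabla\bar h(\rho)$. Finally, since the constants in the uniform bounds above are independent of $(k,n)$, weak-$*$ lower semicontinuity of the $L^2$-norm gives $\sup_t(\|\nabla\sqrt{\rho}\|_{L^2(B_{1/k,n})}+\|\nabla\bar h(\rho)\|_{L^2(B_{1/k,n})})\le C$ uniformly in $k,n$, and monotone convergence as $k,n\to\infty$ delivers membership in $L^\infty(0,T;L^2(\mathbb{R}^N))$.

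The expected main obstacle is the identification of $\xi_2$ with $\nabla\bar h(\rho)$: $\bar h$ is nonlinear and may grow polynomially as $\rho\to\infty$, so strong $L^1_{\mathrm{loc}}$ convergence of $\bar h(\rho^j)$ requires not only pointwise convergence but also equi-integrability. In the critical regime $N\ge 3$, $\gamma\ge N/(N-2)$, the growth condition (\ref{SSIC-E2.8}) is precisely what is needed to control $\bar h(\rho^j)$ through the pressure estimate of Lemma \ref{SSIC-L3.2}; in the remaining subcritical cases the pressure bound of Lemma \ref{SSIC-L3.2} together with the $L^\infty_tL^1\cap L^q_x$ bounds on $\rho^j$ suffices directly.
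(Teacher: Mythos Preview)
Your approach is correct and coincides with the paper's: extract weak-$*$ limits from the uniform bounds of Lemma~\ref{SSIC-L3.1}, identify them via the strong convergence (\ref{SSIC-E3.16}), and pass from annuli to $\mathbb{R}^N$ by lower semicontinuity with constants independent of $k,n$. The paper's proof is a terse version of exactly this.

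However, your anticipated ``main obstacle'' is a phantom. Because the approximate solutions are radial, the bound (\ref{SSIC-E3.12}) yields, on each interval $[1/k,n]$ where $r^{N-1}$ is bounded below, $\sup_t\|\sqrt{\rho^j(\cdot,t)}\|_{H^1([1/k,n];\,dr)}\le C_{k,n}$ in the one-dimensional radial variable; the embedding $H^1(1/k,n)\hookrightarrow L^\infty(1/k,n)$ then gives $\rho^j\in L^\infty(B_{1/k,n}\times[0,T])$ uniformly in $j$. The paper uses exactly this device later (see the estimates around (\ref{SSIC-E3.33-1}) and the bound $\|\rho^j\|_{L^\infty([1/k,n]\times[0,T])}\le C_{k,n}$ in the proof of the momentum identity). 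With $\rho^j$ uniformly bounded on each fixed annulus, pointwise convergence $\rho^j\to\rho$ upgrades to $\bar h(\rho^j)\to\bar h(\rho)$ in every $L^p(B_{1/k,n}\times[0,T])$ by dominated convergence, and the identification $\xi_2=\nabla\bar h(\rho)$ is immediate---no equi-integrability issue, no appeal to Lemma~\ref{SSIC-L3.2}. Your proposed rescue via (\ref{SSIC-E2.8}) would in any case not work as stated: (\ref{SSIC-E2.8}) is a \emph{lower} bound on $h$ for large $\rho$, whereas a Vitali argument for $\bar h(\rho^j)$ would require an \emph{upper} bound on $\bar h$.
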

\begin{proof}
  It follows from  (\ref{SSIC-E3.12}) that
$\{\nabla\sqrt{\rho^j}\}_{\varepsilon_j\leq\frac{1}{k},R_j\geq
{n}}$ is bounded in $L^\infty(0,T;L^2(B_{\frac{1}{k},{n}}))$.
Thus, there exists a function $f\in
L^\infty(0,T;L^2(B_{\frac{1}{k},{n}}))$ such that, up to a
subsequence,
    \begin{equation}
      \nabla\sqrt{\rho^j(\mathrm{x},t)}\stackrel{\ast}{\rightharpoonup}
            f,
            \  \textrm{ weak-}\ast \textrm{ in }
                  L^\infty([0,T],L^{2}(B_{\frac{1}{k},{n}})).
    \end{equation}
Combining it with (\ref{SSIC-E3.16}), one can easily obtain
$f=\nabla\sqrt{\rho}$ and
    $$
    \|\nabla\sqrt{\rho}\|_{L^\infty([0,T],L^{2}(B_{\frac{1}{k},{n}}))}
    \leq\liminf_{j\rightarrow\infty}
        \|\nabla\sqrt{\rho^j}\|_{L^\infty([0,T],L^{2}(B_{\frac{1}{k},{n}}))}
        \leq C
    $$
with a constant $C$ independent of $k$ and $n$. Clearly, we have
$\nabla\sqrt{\rho}\in L^\infty([0,T],L^{2}(\mathbb{R}^N))$.
Similarly, we can easily obtain the result for $\bar{h}$.
\end{proof}

From Propositions \ref{SSIC-P3.1}-\ref{SSIC-P3.2} and Corollary
\ref{SSIC-C3.1}, using  similar arguments as that in the proof of
Lemmas 4.4 and 4.6 in \cite{Mellet2007}, we can obtain the
following proposition.

\begin{prop}\label{SSIC-P3.4}
  1) Up to a subsequence, $\mathrm{m}^j=\rho^j\mathrm{U}^j$
  converges strongly in $L^1_{loc}(\mathbb{R}^N\times[0,T])$
  and $L^2(0,T;L^{\beta}_{loc}(\mathbb{R}^N))$ to some
  $\mathrm{m}(\mathrm{x},t)$,  for all $\beta\in[1,\frac{N}{N-1})$.

  2) $\sqrt{\rho^j}\mathrm{U}^j$ converges strongly in
  $L^2_{loc}(\mathbb{R}^N\times[0,T])$ to
  $\frac{\mathrm{m}}{\sqrt{\rho}}$ (defined to be zero when
  $\mathrm{m}=0$). In particular,
  $\mathrm{m}(\mathrm{x},t)=0$ a.e. on $\{
  \rho(\mathrm{x},t)=0\}$ and there exists a function
  $\mathrm{U}(\mathrm{x},t)$ such that
    $$
    \mathrm{m}(\mathrm{x},t)=\rho(\mathrm{x},t)\mathrm{U}(\mathrm{x},t).
    $$
\end{prop}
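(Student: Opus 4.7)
The plan is to proceed in two stages, exactly following the Mellet--Vasseur strategy: first get strong compactness of the momentum $\mathrm{m}^j=\rho^j\mathrm{U}^j$ by a standard Aubin--Lions argument, then use the logarithmic estimate of Corollary \ref{SSIC-C3.1} to rule out concentration and promote this to strong $L^2_{\rm loc}$ convergence of $\sqrt{\rho^j}\mathrm{U}^j$.

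For part 1), I would first produce a uniform spatial bound for $\mathrm{m}^j$. From Proposition \ref{SSIC-P3.2} and the Sobolev embedding $H^1\hookrightarrow L^{2N/(N-2)}$ applied to $\sqrt{\rho^j}$, we have $\sqrt{\rho^j}$ bounded in $L^\infty(0,T;L^{2N/(N-2)}(\mathbb{R}^N))$; combining this with $\sqrt{\rho^j}\mathrm{U}^j\in L^\infty(0,T;L^2)$ from Lemma \ref{SSIC-L3.1} via H\"older gives $\mathrm{m}^j=\sqrt{\rho^j}\cdot(\sqrt{\rho^j}\mathrm{U}^j)$ bounded in $L^\infty(0,T;L^{N/(N-1)}(\mathbb{R}^N))$, with the obvious modification in the case $N=2$. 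Next, the momentum equation (\ref{SSIC-E3.2}) expresses $\partial_t\mathrm{m}^j$ as the distributional divergence of terms in $\rho^j\mathrm{U}^j\otimes\mathrm{U}^j$, $(\rho^j)^\gamma$, $h(\rho^j)D(\mathrm{U}^j)$ and $g(\rho^j)\mathrm{div}\mathrm{U}^j$; each is controlled locally in $L^p(0,T;L^q_{\rm loc})$ by combining the dissipation bound $h(\rho^j)|\nabla\mathrm{U}^j|^2\in L^1$ from Lemma \ref{SSIC-L3.1}, the pressure integrability of Lemma \ref{SSIC-L3.2}, and the strong compactness of $\rho^j$. This places $\partial_t\mathrm{m}^j$ in $L^p(0,T;W^{-1,q}_{\rm loc}(\mathbb{R}^N))$ for suitable exponents, and the Aubin--Lions lemma yields a subsequence converging strongly in $L^1_{\rm loc}(\mathbb{R}^N\times[0,T])$. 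Interpolating the $L^1_{\rm loc}$ strong convergence against the uniform $L^\infty_tL^{N/(N-1)}_x$ bound promotes it to strong convergence in $L^2(0,T;L^\beta_{\rm loc})$ for every $\beta\in[1,N/(N-1))$.

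For part 2), since $\{\sqrt{\rho^j}\mathrm{U}^j\}$ is bounded in $L^\infty(0,T;L^2(\mathbb{R}^N))$, up to a subsequence it converges weakly-$\ast$ to some $v\in L^\infty_tL^2_x$. The strong convergence $\sqrt{\rho^j}\to\sqrt{\rho}$ in $L^p_{\rm loc}$ for every $p<2N/(N-2)$ (a consequence of Proposition \ref{SSIC-P3.2}), combined with the weak convergence of $\sqrt{\rho^j}\mathrm{U}^j$, lets us pass to the limit in the identity $\mathrm{m}^j=\sqrt{\rho^j}\cdot(\sqrt{\rho^j}\mathrm{U}^j)$ and obtain $\mathrm{m}=\sqrt{\rho}\,v$ a.e. Hence $\mathrm{m}=0$ a.e.\ on $\{\rho=0\}$, and setting $\mathrm{U}=\mathrm{m}/\rho$ on $\{\rho>0\}$ and $\mathrm{U}=0$ on $\{\rho=0\}$ yields $\mathrm{m}=\rho\mathrm{U}$ and $v=\mathrm{m}/\sqrt{\rho}$ with the convention $0/0=0$.

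The hard part, which is exactly where the logarithmic estimate of Corollary \ref{SSIC-C3.1} becomes essential, is upgrading this weak convergence to strong $L^2_{\rm loc}$ convergence of $\sqrt{\rho^j}\mathrm{U}^j$. For this I would use the truncation $|\mathrm{U}^j|\leq M$: Corollary \ref{SSIC-C3.1} gives
\begin{equation*}
\int_{\{|\mathrm{U}^j|>M\}}\rho^j|\mathrm{U}^j|^2\,d\mathrm{x}\leq \frac{C}{\ln(1+M^2)},
\end{equation*}
providing equi-integrability in velocity uniformly in $j$. On the complementary set $\{|\mathrm{U}^j|\leq M\}$, the truncated sequence $\sqrt{\rho^j}\mathrm{U}^j\mathbf{1}_{|\mathrm{U}^j|\leq M}$ is dominated by $M\sqrt{\rho^j}$, and since $\sqrt{\rho^j}$ converges strongly in $L^2_{\rm loc}$, and since the strong $L^1_{\rm loc}$ convergence of $\mathrm{m}^j$ together with a.e.\ convergence of $\rho^j$ yields a.e.\ convergence of $\mathrm{U}^j$ on $\{\rho>0\}$ (and negligibility on $\{\rho=0\}$ since $\mathrm{m}=0$ there), the dominated convergence theorem gives strong $L^2_{\rm loc}$ convergence of this truncated sequence to $\sqrt{\rho}\,\mathrm{U}\,\mathbf{1}_{|\mathrm{U}|\leq M}$. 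Letting $M\to\infty$ and absorbing the error by the equi-integrability closes the argument and identifies the strong $L^2_{\rm loc}$ limit as $\mathrm{m}/\sqrt{\rho}$.
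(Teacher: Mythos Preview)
Your outline for part 2) is essentially the Mellet--Vasseur argument the paper also invokes, and is fine. The issue is in part 1): the Aubin--Lions step, as you have written it, does not go through.

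You feed into Aubin--Lions only the bound $\mathrm{m}^j\in L^\infty(0,T;L^{N/(N-1)})$ together with a negative-order time bound $\partial_t\mathrm{m}^j\in L^p(0,T;W^{-1,q}_{\rm loc})$. But Aubin--Lions requires a \emph{compact} embedding $X_0\hookrightarrow X$, and $L^{N/(N-1)}(K)\hookrightarrow L^1(K)$ is merely continuous, not compact. With what you wrote you have no spatial compactness at all, so no strong convergence follows. The paper closes this gap by producing a genuine spatial derivative bound: writing
\[
\nabla(\rho^j\mathrm{U}^j)=2\sqrt{\rho^j}\mathrm{U}^j\,\nabla\sqrt{\rho^j}+\sqrt{\rho^j}\,\sqrt{\rho^j}\nabla\mathrm{U}^j,
\]
one uses (\ref{SSIC-E3.12}), (\ref{SSIC-E3.10}) and $h'\geq\nu$ to bound $\nabla\mathrm{m}^j$ in $L^2(0,T;L^1(B_{1/k,n}))$, so that $\mathrm{m}^j\in L^2(0,T;W^{1,1}(B_{1/k,n}))$ and the compact Sobolev embedding $W^{1,1}\hookrightarrow L^\beta$, $\beta<\tfrac{N}{N-1}$, makes Aubin--Lions applicable.

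There is a second, related point you glossed over. The entropy estimate (\ref{SSIC-E3.12}) and hence the gradient information on $\sqrt{\rho^j}$ are only available on the annulus $B_{\varepsilon_j,R_j}$; the zero extension (\ref{SSIC-E3.7}) creates a jump at $r=\varepsilon_j$, so $\nabla\sqrt{\rho^j}$ is not globally controlled. The paper therefore argues first on fixed annuli $B_{1/k,n}\subset B_{\varepsilon_j,R_j}$, uses there that $\rho^j\in L^\infty(B_{1/k,n}\times[0,T])$ (from the radial $H^1$ bound) to control the viscous part of $\partial_t\mathrm{m}^j$, applies Aubin--Lions on $B_{1/k,n}$, and only afterwards extends the strong convergence to $B_n$ via the uniform $L^\infty_tL^{N/(N-1)}_x$ bound and the smallness of $|B_{1/k}|$. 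Your write-up treats everything as if it held directly on $\mathbb{R}^N$, which hides exactly where the estimates are valid.
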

\begin{proof}
  We only consider the case $N\geq3$, since the
proof of the case that $N=2$ is similar.

1) Since $\{\sqrt{\rho^j}\}$ is bounded in $L^\infty(0,T;L^2\cap
L^\frac{2N}{N-2})$ and $\{\sqrt{\rho^j}\mathrm{U}^j\}$ is bounded
in $L^\infty(0,T;L^2)$, we have that
    \begin{equation}
    \{{\rho^j}\mathrm{U}^j\}\textrm{ is bounded in }
    L^\infty(0,T;L^1\cap L^{\frac{N}{N-1}}(\mathbb{R}^N)).\label{SSIC-E3.25-1}
    \end{equation}

Since $\nabla(\rho^j
\mathrm{U}^j)=2\sqrt{\rho^j}\mathrm{U}^j\nabla\sqrt{\rho^j}+\sqrt{\rho^j}\sqrt{\rho^j}\nabla\mathrm{U}^j$,
from (\ref{SSIC-E2.5}) and (\ref{SSIC-E3.9})-(\ref{SSIC-E3.13-1}),
we obtain that $\{\nabla(\rho^j
\mathrm{U}^j)\}_{\varepsilon_j\leq\frac{1}{k},R_j\geq {n}}$ is
bounded in $L^2(0,T;L^{1}(B_{\frac{1}{k},{n}}))$. In particular,
we get
    $$
    \{(\rho^j
\mathrm{U}^j)\}_{\varepsilon_j\leq\frac{1}{k},R_j\geq {n}}\
\textrm{ is bounded in } L^2(0,T;W^{1,1}(B_{\frac{1}{k},{n}})).
    $$

Since $\{\rho^j \}_{\varepsilon_j\leq\frac{1}{k},R_j\geq {n}}$ is
bounded in $L^\infty(B_{\frac{1}{k},{n}}\times[0,T])$, from
(\ref{SSIC-E3.2}), we can obtain that
    $$
    \{\partial_t(\rho^j
\mathrm{U}^j)\}_{\varepsilon_j\leq\frac{1}{k},R_j\geq {n}}\
\textrm{ is bounded in }
L^2(0,T;W^{-2,\frac{N}{N-1}}(B_{\frac{1}{k},{n}})).
    $$
From the Aubin-Lions lemma, we have
     \begin{equation*}
      \rho^j\mathrm{U}^j\rightarrow
            \mathrm{m},
    \end{equation*}
strongly in $L^2([0,T];L^\beta(B_{\frac{1}{k},{n}}))$ for all
$\beta\in[1,\frac{N}{N-1})$. From (\ref{SSIC-E3.25-1}), we can
easily obtain that
    \begin{equation*}
      \rho^j\mathrm{U}^j\rightarrow
            \mathrm{m},
    \end{equation*}
strongly in $L^2([0,T];L^\beta(B_{{n}}))$ for all
$\beta\in[1,\frac{N}{N-1})$.

2) Using the similar argument as that in the proof of Lemma 4.6 in
\cite{Mellet2007}, we can obtain the part 2) of Proposition
\ref{SSIC-P3.4}, where
    $$
    \mathrm{U}=\left\{
    \begin{array}{ll}
    \frac{\mathrm{m}}{\rho},&\mathrm{if }\rho\not=0,\\
    0,&\mathrm{if }\rho=0,
    \end{array}
    \right.
    $$
 and omit the detail.
\end{proof}

Then, using  similar arguments as that in the proof of Corollary
4.2 in \cite{Guo2007}, we can obtain the following corollary and
omit the details.

\begin{cor}\label{SSIC-C3.2}
  Let $m^j(r,t)=(\rho^ju^j)(r,t)$, then

  1) there exists a function $m(r,t)$ such that
  $\mathrm{m}(\mathrm{x},t)=m(r,t)\frac{\mathrm{x}}{r}$ and $m^j(r,t)$
  converges to $m(r,t)$ strongly in
  $L^2(0,T;L^{\beta}_{\mathrm{loc}}([0,\infty);r^{N-1}dr))$ for all $\beta\in[1,\frac{N}{N-1})$;

  2) there exits a function $u(r,t)$ such that
  $\mathrm{U}(\mathrm{x},t)=u(r,t)\frac{\mathrm{x}}{r}$ and
  $\sqrt{\rho^j}u^j$ converges to $\frac{m}{\sqrt{\rho}}$ (defined to be  zero when $m=0$)
  strongly in $L^2(0,T;L^2_{\mathrm{loc}}([0,\infty);r^{N-1}dr))$.
\end{cor}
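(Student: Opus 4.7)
\medskip
\noindent\textit{Proof sketch.} The plan is to transport the Euclidean strong convergence supplied by Proposition \ref{SSIC-P3.4} into one dimensional convergence with the weight $r^{N-1}\,dr$, after first identifying the radial factor of each limit.

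First I would show that $\mathrm{m}$ and $\mathrm{U}$ inherit the rotational equivariance of the approximations. By construction $\mathrm{m}^j(R\mathrm{x},t)=R\,\mathrm{m}^j(\mathrm{x},t)$ for every $R\in O(N)$; passing to the almost everywhere limit along the subsequence produced by Proposition \ref{SSIC-P3.4}(1) preserves this property, so there exists a scalar function $m(r,t)$ with $\mathrm{m}(\mathrm{x},t)=m(r,t)\frac{\mathrm{x}}{r}$. An unambiguous choice is $m(r,t):=\mathrm{m}(re_1,t)\cdot e_1$, which by equivariance agrees with the almost everywhere pointwise limit of $m^j(r,t)$. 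Applying the same argument to $\sqrt{\rho^j}\mathrm{U}^j$ and using Proposition \ref{SSIC-P3.4}(2) together with the radial symmetry of $\rho$ from Proposition \ref{SSIC-P3.2} shows that $\mathrm{m}/\sqrt{\rho}$ has the radial form $\bigl(m/\sqrt{\rho}\bigr)(r,t)\frac{\mathrm{x}}{r}$, with the convention $m/\sqrt{\rho}=0$ where $\rho=0$.

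Next I would translate norms via polar coordinates. Since $|\mathrm{x}/r|=1$, for any scalar function $f$ and any $0\le a<b<\infty$,
\begin{equation*}
  \int_{a<|\mathrm{x}|<b}\Bigl|f(|\mathrm{x}|)\tfrac{\mathrm{x}}{r}\Bigr|^\beta d\mathrm{x}
  =\omega_N\int_a^b |f(r)|^\beta r^{N-1}\,dr,
\end{equation*}
where $\omega_N$ denotes the area of $S^{N-1}$. Applying this with $f=m^j-m$ on each ball $B_n$ yields
\begin{equation*}
  \|m^j-m\|_{L^2(0,T;L^\beta([0,n];r^{N-1}dr))}
  =\omega_N^{-1/\beta}\,\|\mathrm{m}^j-\mathrm{m}\|_{L^2(0,T;L^\beta(B_n))},
\end{equation*}
so Proposition \ref{SSIC-P3.4}(1) immediately gives part 1) for every $\beta\in[1,\tfrac{N}{N-1})$. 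Part 2) follows from the same change of variables applied to the radial scalar $\sqrt{\rho^j}u^j-m/\sqrt{\rho}$, combined with the strong $L^2_{\mathrm{loc}}(\mathbb{R}^N\times[0,T])$ convergence from Proposition \ref{SSIC-P3.4}(2). Setting $u(r,t):=m(r,t)/\rho(r,t)$ where $\rho>0$ and $u(r,t)=0$ elsewhere then gives $\mathrm{U}(\mathrm{x},t)=u(r,t)\frac{\mathrm{x}}{r}$ by construction.

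The only delicate point is the well definedness of the radial representatives of the limits; once it is handled by the equivariance / measurable-section argument of the first paragraph, the rest is a direct change of variables and no new a priori estimate beyond Proposition \ref{SSIC-P3.4} is required.
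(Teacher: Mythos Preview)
Your proposal is correct and follows the natural route: the paper itself omits all details, merely citing Corollary~4.2 of \cite{Guo2007}, and your argument---pass the equivariance $\mathrm{m}^j(R\mathrm{x},t)=R\,\mathrm{m}^j(\mathrm{x},t)$ to the limit to identify the radial factor, then convert the $L^2(0,T;L^\beta_{\mathrm{loc}}(\mathbb{R}^N))$ convergence of Proposition~\ref{SSIC-P3.4} into weighted radial convergence via polar coordinates---is exactly the standard proof this citation points to. No gap.
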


Now, we show that $(\rho,\mathrm{U})$ obtained in Propositions
\ref{SSIC-P3.1}-\ref{SSIC-P3.4} satisfies the weak form of
(\ref{SSIC-E2.1}), that is (\ref{SSIC-E2.10}) holds.

\begin{prop}
Let $(\rho,\mathrm{U})$ be the limit described as  in Propositions
\ref{SSIC-P3.1}-\ref{SSIC-P3.4}. Then  (\ref{SSIC-E2.10}) holds.
Moreover,  $\rho\in C([0,\infty);L^{1}(\mathbb{R}^N))$.
\end{prop}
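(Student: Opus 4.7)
The plan is first to pass to the limit in the weak form of the continuity equation satisfied by the zero extensions $(\rho^j,\mathrm{U}^j)$ of (\ref{SSIC-E3.7})--(\ref{SSIC-E3.8}), and then to upgrade the resulting distributional identity to strong continuity of $\rho$ as an $L^1(\mathbb{R}^N)$-valued function. The main ingredients are the strong local convergences from Propositions~\ref{SSIC-P3.2} and~\ref{SSIC-P3.4}, together with the uniform $L^2$ bounds on $\sqrt{\rho^j}$ and $\sqrt{\rho^j}\,\mathrm{U}^j$ from Lemma~\ref{SSIC-L3.1}.

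For the first step, I fix $\phi_1\in C^1_c(\mathbb{R}^N\times[0,\infty))$ and choose $n_0$ with $\mathrm{supp}\,\phi_1\subset B_{n_0}\times[0,T]$. Because $u^j$ vanishes at $r=\varepsilon_j$ and $r=R_j$, integrating (\ref{SSIC-E3.3}) by parts against $\phi_1$ on $B_{\varepsilon_j,R_j}$ produces no boundary contribution, and for every $j$ with $\varepsilon_j<1/n_0$, $R_j>n_0$ the zero extensions satisfy
$$
\int_{\mathbb{R}^N}\rho^j\phi_1\,d\mathrm{x}\Big|^{t_2}_{t_1}=\int^{t_2}_{t_1}\!\!\int_{\mathbb{R}^N}\bigl(\rho^j\partial_t\phi_1+\rho^j\mathrm{U}^j\cdot\nabla\phi_1\bigr)\,d\mathrm{x}dt.
$$
Each term can be restricted to $B_{n_0}$; the first two then converge by Proposition~\ref{SSIC-P3.2} ($\rho^j\to\rho$ in $C([0,T];L^{N/(N-1)}(B_{n_0}))$), and the transport term by Proposition~\ref{SSIC-P3.4} ($\rho^j\mathrm{U}^j\to\rho\mathrm{U}$ in $L^2(0,T;L^\beta(B_{n_0}))$ for some $\beta>1$). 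This yields (\ref{SSIC-E2.10}).

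For the continuity statement I pick cut-offs $\psi_n\in C^\infty_c(\mathbb{R}^N)$ with $\psi_n\equiv 1$ on $B_n$, $\mathrm{supp}\,\psi_n\subset B_{2n}$ and $|\nabla\psi_n|\leq C/n$. Plugging $\phi_1(x,t)=\psi_n(x)\chi(t)$ into (\ref{SSIC-E2.10}) (with $\chi\in C^1_c([0,\infty))$, $\chi\equiv 1$ on $[0,T]$) and applying Cauchy--Schwarz together with the Lemma~\ref{SSIC-L3.1} bound $\|\rho\mathrm{U}\|_{L^\infty(L^1)}\leq\|\sqrt{\rho}\|_{L^\infty L^2}\,\|\sqrt{\rho}\,\mathrm{U}\|_{L^\infty L^2}\leq C$ yields
$$
\left|\int_{\mathbb{R}^N}\rho(t)\psi_n\,d\mathrm{x}-\int_{\mathbb{R}^N}\rho(s)\psi_n\,d\mathrm{x}\right|\leq\frac{C|t-s|}{n}.
$$
Letting $n\to\infty$ gives mass conservation $\int\rho(t)\,d\mathrm{x}=\int\rho_0\,d\mathrm{x}$. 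The identical estimate at the approximate level, combined with $\rho^j(\cdot,0)\to\rho_0$ in $L^1(\mathbb{R}^N)$, furnishes the uniform tightness
$$
\sup_{t\in[0,T],\,j}\int_{|x|\geq 2n}\rho^j(\mathrm{x},t)\,d\mathrm{x}\longrightarrow 0\quad\text{as }n\to\infty.
$$
Uniform tightness combined with the local convergence of Proposition~\ref{SSIC-P3.2} then upgrades to $\rho^j\to\rho$ in $C([0,T];L^1(\mathbb{R}^N))$, and since each $\rho^j$ belongs to $C([0,T];L^1(\mathbb{R}^N))$ (it is smooth and compactly supported in $x$), the uniform limit $\rho$ inherits this continuity.

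The main obstacle I anticipate is precisely this last upgrade: on the unbounded domain $\mathbb{R}^N$, weak $L^1$-continuity plus conservation of the $L^1$-norm does \emph{not} by itself force strong $L^1$-continuity, because mass can in principle oscillate out to infinity. Quantitative tightness must therefore be extracted from the PDE itself via the cut-off calculation above, and this step is available only because the basic energy inequality in Lemma~\ref{SSIC-L3.1} provides the $L^\infty(0,T;L^1(\mathbb{R}^N))$ bound on $\rho\mathrm{U}$.
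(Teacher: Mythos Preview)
Your argument is correct. For the weak form (\ref{SSIC-E2.10}) your approach is essentially the paper's: both pass to the limit in the continuity equation using the strong convergences of $\rho^j$ and $\rho^j\mathrm{U}^j$; the paper merely does the computation in radial coordinates first and then recovers a general test function by spherical averaging $\varphi(r,t)=\int_S\phi_1(ry,t)\,dS_y$, while you work directly in Cartesian coordinates.

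The genuine difference is in how you obtain $\rho\in C([0,\infty);L^1(\mathbb{R}^N))$. The paper does \emph{not} argue via tightness. Instead it passes to the limit in the renormalized equation
\[
\partial_t\sqrt{\rho^j}+\mathrm{div}(\sqrt{\rho^j}\,\mathrm{U}^j)=\tfrac{1}{2}\sqrt{\rho^j}\,\mathrm{div}\,\mathrm{U}^j,
\]
using that $\sqrt{\rho^j}\,\mathrm{div}\,\mathrm{U}^j$ is bounded in $L^2(0,T;L^2)$ by Lemma~\ref{SSIC-L3.1}. This yields $\partial_t\sqrt{\rho}\in L^2(0,T;H^{-1})$, and combined with $\sqrt{\rho}\in L^\infty(0,T;H^1)$ one concludes $\sqrt{\rho}\in C([0,T];L^2)$, hence $\rho\in C([0,T];L^1)$. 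Your route is more elementary---it avoids the $\sqrt{\rho}$ equation and the Lions--Magenes interpolation---and delivers two bonuses the paper's proof does not: mass conservation (\ref{SSIC-E2.15}) and the \emph{global} strong convergence $\rho^j\to\rho$ in $C([0,T];L^1(\mathbb{R}^N))$, not just locally. The paper's route, on the other hand, works directly with the limit and does not require upgrading Proposition~\ref{SSIC-P3.2} beyond compact sets. One small inaccuracy in your write-up: the zero extension $\rho^j$ is not smooth (it jumps at $|x|=\varepsilon_j$ and $|x|=R_j$); but it is bounded, compactly supported, and smooth on the annulus with $u^j=0$ on the boundary, which is all you need for $\rho^j\in C([0,T];L^1)$.
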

\begin{proof}
 We only consider the case $t_1>0$, since the
proof of the case that $t_1=0$ is similar.

At first, we derive the weak form of (\ref{SSIC-E3.3}). For any
$\varphi\in C^1_{c}([0,\infty)\times[0,\infty))$, there exists
$n>0$ such that supp$\varphi(\cdot,t)\subset[0,n]$. It follows
from (\ref{SSIC-E3.3}), (\ref{SSIC-E3.6-10}) and
(\ref{SSIC-E3.7})-(\ref{SSIC-E3.8}) that
    \begin{equation}
      \int^\infty_0\rho^j\varphi
      r^{N-1}dr|^{t_2}_{t_1}-\int^{t_2}_{t_1}\int^\infty_0(\rho^j\varphi_t+\rho^ju^j\varphi_r)r^{N-1}drdt=0,
      \label{SSIC-E3.21}
    \end{equation}
for any $j$ satisfying $R_j\geq n$. From Proposition
\ref{SSIC-P3.2}, we have
    $$
     \int^\infty_0\rho^j\varphi
      r^{N-1}dr\rightarrow  \int^\infty_0\rho\varphi
      r^{N-1}dr,
    $$
and
        $$
    \int^{t_2}_{t_1}\int^\infty_0\rho^j\varphi_tr^{N-1}drdt
    \rightarrow
        \int^{t_2}_{t_1}\int^\infty_0\rho\varphi_tr^{N-1}drdt
        $$
        as $j\rightarrow0$.
From Proposition \ref{SSIC-P3.2} and Corollary \ref{SSIC-C3.2}, we
have
    \begin{eqnarray}
      &&\int^{t_2}_{t_1}\int^\infty_0\rho^ju^j\varphi_rr^{N-1}drdt=
            \int^{t_2}_{t_1}\int^\infty_0\sqrt{\rho^j}(\sqrt{\rho^j}
            u^j)\varphi_rr^{N-1}drdt\rightarrow\nonumber\\
      &&            \int^{t_2}_{t_1}\int^\infty_0\sqrt{\rho}(\sqrt{\rho}
            u)\varphi_rr^{N-1}drdt=\int^{t_2}_{t_1}\int^\infty_0\rho u\varphi_rr^{N-1}drdt,
            \label{SSIC-E3.22-1}
    \end{eqnarray}
as $j\rightarrow0$.

Therefore, taking  limit $j\rightarrow\infty$ in
(\ref{SSIC-E3.21}), we get
    \begin{equation}
      \int^\infty_0\rho\varphi
      r^{N-1}dr|^{t_2}_{t_1}-\int^{t_2}_{t_1}\int^\infty_0(\rho\varphi_t+\rho
      u\varphi_r)r^{N-1}drdt=0.
      \label{SSIC-E3.22}
    \end{equation}
For any $\phi_1\in C^1_c(\mathbb{R}^N\times[t_1,t_2])$, define
    $$\varphi(r,t)=\int_S\phi_1(ry,t)dS_y,$$
where the integral is over the unit sphere $S=S^{N-1}$ in
$\mathbb{R}^N$. Then is follows from (\ref{SSIC-E3.22}) that
(\ref{SSIC-E2.10}) holds.

Similarly, we can easily obtain
    $$
    \partial_t\sqrt{\rho}+\mathrm{div}(\sqrt{\rho}u)-\frac{1}{2}\mathcal{Q}=0,
    \ \textrm{ in } \ \mathcal{D'},
    $$
where $\mathcal{Q}\in L^2(0,T;L^2(\mathbb{R}^N))$ is the weak
limit of $\{\sqrt{\rho^j}\mathrm{div}u^j\}$ in
$L^2(0,T;L^2(\mathbb{R}^N))$. Thus, we have $\sqrt{\rho}_t\in
L^2([0,\infty);H^{-1}(\mathbb{R}^N))$. Since $\sqrt{\rho}\in
L^\infty([0,\infty);H^{1}(\mathbb{R}^N))$,  we can easily get that
$\sqrt{\rho}\in C([0,\infty);L^{{2}}(\mathbb{R}^N))$ .
\end{proof}
In the following, we prove that $(\rho,\mathrm{U})$ satisfies
(\ref{SSIC-E2.11})

\begin{prop}
  Let $(\rho,\mathrm{U})$ be the limit described as  in Propositions
\ref{SSIC-P3.1}-\ref{SSIC-P3.4}. Then  (\ref{SSIC-E2.11}) holds.
\end{prop}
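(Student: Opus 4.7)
The plan is to start from the weak momentum formulation satisfied by the approximate pair $(\rho^j,\mathrm{U}^j)$ (which is smooth on the annulus and zero outside), tested against an arbitrary $\phi_2\in(C^2_c(\mathbb{R}^N\times[0,\infty)))^N$ with $\phi_2(T,\cdot)=0$, and then pass to the limit $j\to\infty$ term by term. Since $\phi_2$ is compactly supported, only finitely many $j$ are needed for which $\mathrm{supp}\,\phi_2\subset B_{\varepsilon_j,R_j}\times[0,T]$, so that the boundary data in (\ref{SSIC-E3.6-10}) and the zero extension (\ref{SSIC-E3.7})-(\ref{SSIC-E3.8}) produce no boundary contributions. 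The initial-data term $\int \mathrm{m}_0\cdot\phi_2(0,\cdot)$ arises from the convergence of $(\rho^j\mathrm{U}^j)(\cdot,0)=\rho_{0,\varepsilon_j,R_j,\delta_j}u_{0,\varepsilon_j,R_j,\delta_j}\to\mathrm{m}_0$ in, say, $L^1_{loc}$, guaranteed by (\ref{SSIC-E2.13}) and the construction of the mollified data.

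I would handle the ``soft'' terms first. The convective term is rewritten as $\rho^j\mathrm{U}^j\otimes\mathrm{U}^j=(\sqrt{\rho^j}\mathrm{U}^j)\otimes(\sqrt{\rho^j}\mathrm{U}^j)$; its limit follows from the strong $L^2_{loc}$ convergence $\sqrt{\rho^j}\mathrm{U}^j\to\frac{\mathrm{m}}{\sqrt{\rho}}=\sqrt{\rho}\mathrm{U}$ of Proposition \ref{SSIC-P3.4}(2). The time-derivative term $\int \rho^j\mathrm{U}^j\cdot\partial_t\phi_2=\int \sqrt{\rho^j}(\sqrt{\rho^j}\mathrm{U}^j)\partial_t\phi_2$ is treated by combining the strong $L^\infty(0,T;L^p_{loc})$ convergence of $\sqrt{\rho^j}$ from Proposition \ref{SSIC-P3.2} with the strong $L^2_{loc}$ convergence of $\sqrt{\rho^j}\mathrm{U}^j$. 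The pressure term is handled by Lemma \ref{SSIC-L3.3}. The extra artificial viscosity contributions $\varepsilon_j(\rho^j)^\theta D(\mathrm{U}^j)$ and $(\theta-1)\varepsilon_j(\rho^j)^\theta\mathrm{div}\,\mathrm{U}^j$ are estimated by Cauchy-Schwarz as
$$
\Bigl|\varepsilon_j\!\int (\rho^j)^\theta D(\mathrm{U}^j):\nabla\phi_2\Bigr|
\leq\sqrt{\varepsilon_j}\Bigl(\varepsilon_j\!\int (\rho^j)^\theta|\nabla\mathrm{U}^j|^2\Bigr)^{1/2}\Bigl(\int(\rho^j)^\theta|\nabla\phi_2|^2\Bigr)^{1/2},
$$
which vanishes as $j\to\infty$ thanks to (\ref{SSIC-E3.10}) and the local $L^p$ bounds on $\rho^j$ from Proposition \ref{SSIC-P3.2}.

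The heart of the argument is the passage to the limit in the viscous terms, for which I would use exactly the reformulation displayed at the end of Definition \ref{SSIC-D2.1}. Each integrand there factors as a product of three pieces: (a) the bounded second or first derivatives of $\phi_2$; (b) a factor $\sqrt{\rho^j}\mathrm{U}^j_k$ converging strongly in $L^2_{loc}$ by Proposition \ref{SSIC-P3.4}(2); and (c) a ``density factor'' which is either $h(\rho^j)/\sqrt{\rho^j}$ (respectively $g(\rho^j)/\sqrt{\rho^j}$) or a derivative expression. Using the identity $2h'(\rho)\partial_i\sqrt{\rho}=\partial_i\bar{h}(\rho)$ that motivates the choice of $\bar{h}$ in Proposition \ref{SSIC-P3.3}, the derivative terms become $\sqrt{\rho^j}\mathrm{U}^j_k\,\partial_i\bar{h}(\rho^j)\,\partial_i\phi_k$, so one has the pairing of a strongly $L^2_{loc}$-convergent factor with the weakly-$*$ convergent $\nabla\bar{h}(\rho^j)\rightharpoonup\nabla\bar{h}(\rho)$ given by Proposition \ref{SSIC-P3.3}; the analogous statement for $g'$ uses $|g'|\leq\nu^{-1}h'$. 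The purely multiplicative factors $h(\rho^j)/\sqrt{\rho^j}$ and $g(\rho^j)/\sqrt{\rho^j}$ converge strongly in $L^p_{loc}$ on supp\,$\phi_2$ by Proposition \ref{SSIC-P3.2} and dominated convergence (via the growth bounds on $h,g$ from (\ref{SSIC-E2.5})-(\ref{SSIC-E2.7}) and the uniform local bounds on $\rho^j$).

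The main obstacle will be the diffusion terms, where one must simultaneously manage the behavior near the vacuum $\{\rho=0\}$ and verify that the weak/strong products described above are really justified in $L^1$ on the support of $\phi_2$. The decomposition of the diffusion integrals in Definition \ref{SSIC-D2.1} is designed precisely so that no quantity of the form $\nabla\mathrm{U}$ ever appears by itself; every factor is either a weakly-convergent density gradient of $\bar h$ or $\sqrt{\rho}$, a strongly-convergent $\sqrt{\rho}\mathrm{U}$, or a continuous function of $\rho$, and the corresponding products are either (strong)$\times$(weak) in $L^2\times L^2$ or (strong)$\times$(strong) in $L^2_{loc}\times L^p_{loc}$. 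Once these passages are justified, collecting the limits yields exactly (\ref{SSIC-E2.11}) for $(\rho,\mathrm{U})$, completing the proof.
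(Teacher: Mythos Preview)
Your proposal has a genuine gap at the inner boundary of the annulus. You claim that for large $j$ one has $\mathrm{supp}\,\phi_2\subset B_{\varepsilon_j,R_j}\times[0,T]$, but $B_{\varepsilon_j,R_j}$ is the \emph{annulus} $\{\varepsilon_j<|\mathrm{x}|<R_j\}$: any test function whose support meets the origin is \emph{never} contained in it, no matter how small $\varepsilon_j$ is. The approximate pair $(\rho^j,\mathrm{U}^j)$ solves the equation only on this annulus; the zero extension is discontinuous in $\rho^j$ across $|\mathrm{x}|=\varepsilon_j$ (the density jumps from a strictly positive value to $0$), so testing the strong equation against $\phi_2$ and integrating by parts produces nonzero boundary contributions at $|\mathrm{x}|=\varepsilon_j$ from both the pressure gradient and the viscous stress divergence. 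In the paper these are collected into the term $\varepsilon_b^j$ of (\ref{SSIC-E3.23})--(\ref{SSIC-E3.28}), and showing $\varepsilon_b^j\to0$ is a nontrivial step (the Claim around (\ref{SSIC-E3.25})): it uses the continuity equation at $r=\varepsilon_j$ to rewrite $u^j_r(\varepsilon_j,t)$ in terms of $\partial_t\rho^j(\varepsilon_j,t)$, together with the pointwise bounds $|\sqrt{\rho^j(\varepsilon_j,t)}|\leq C\varepsilon_j^{-(N-1)/2}$ and $|\bar h(\rho^j(\varepsilon_j,t))|\leq C\varepsilon_j^{-(N-1)/2}$ coming from the $H^1$ trace of the radial profile, and crucially the fact that the spherically averaged test function $\phi(r,t)=\int_S\phi_2(ry,t)\cdot y\,dS_y$ satisfies $\phi(0,t)=0$. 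None of this is present in your outline.

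A related omission: the weak-$*$ convergence of $\nabla\bar h(\rho^j)$ in Proposition~\ref{SSIC-P3.3} is only asserted on the annuli $B_{1/k,n}$, because the entropy estimate (\ref{SSIC-E3.13-1}) and hence (\ref{SSIC-E3.12}) hold only on $B_{\varepsilon_j,R_j}$, not on $\mathbb{R}^N$. Your ``(strong)$\times$(weak)'' pairing for the diffusion terms therefore cannot be carried out directly on $\mathrm{supp}\,\phi_2$; the paper splits the integral as $B_{\varepsilon_j,1/k}\cup B_{1/k,n}$, passes to the limit on the outer annulus, and uses Corollary~\ref{SSIC-C3.1} and Lemma~\ref{SSIC-L3.2} to show the inner contribution is $o(1)$ as $M,k\to\infty$. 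Once you incorporate the boundary term $\varepsilon_b^j$ and this inner/outer splitting, the rest of your term-by-term strategy matches the paper.
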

\begin{proof}
   For any
$\phi\in C^2_{c}([0,\infty)\times[0,T])$ with
$\phi(0,t)=\phi(r,T)=0$, there exists $n>0$ such that
$\mathrm{supp}\phi(\cdot,t)\subset [0, {n}]$. It follows from
(\ref{SSIC-E3.4})-(\ref{SSIC-E3.6-10}) and
(\ref{SSIC-E3.7})-(\ref{SSIC-E3.8}) that
    \begin{eqnarray}
        &&  \int^\infty_{\varepsilon_j}\rho^j_0u^j_0\phi(r,0)
      r^{N-1}dr\nonumber\\
        &&+\int^{T}_{0}\int^\infty_0(\rho^ju^j\phi_t+\rho^j(u^j)^2\phi_r+(\rho^j)^\gamma(\phi_r+\frac{(N-1)\phi}{r}))r^{N-1}drdt
      \nonumber\\
            &&-\int^T_0\int^\infty_{\varepsilon_j}2h(\rho^j)(u^j_r\phi_r+\frac{(N-1)u^j\phi}{r^2})r^{N-1}drdt\nonumber\\
      &=&-\int^T_0\int^\infty_{\varepsilon_j}\varepsilon_j(\rho^j)^\theta
            (\frac{(N-1)u^j_r\phi}{r}+\frac{(N-1)u^j\phi_r}{r}+\frac{(N-1)(N-2)u^j\phi}{r^2})r^{N-1}drdt
            +\varepsilon_b^j
      \nonumber\\
            &&
    +\int^T_0\int^\infty_{\varepsilon_j}(g(\rho^j)+\theta\varepsilon_j(\rho^j)^\theta)(u^j_r+\frac{(N-1)u^j}{r^2})
    (\phi_r+\frac{(N-1)\phi}{r})r^{N-1}drdt,
      \label{SSIC-E3.23}
    \end{eqnarray}
for any $j$ satisfying $\varepsilon_j\leq \frac{1}{k}$ and
$R_j\geq {n}$, where
    \begin{equation}
      \varepsilon_b^j=\int^T_0\left\{
      [(2h(\rho^j)+g(\rho^j)+\theta\varepsilon_j(\rho^j)^\theta) u^j_r]
      (\varepsilon_j,t)\varepsilon^{N-1}_j\phi(\varepsilon_j,t)-
        \varepsilon_j^{N-1}(\rho^j)^\gamma(\varepsilon_j,t)\phi(\varepsilon_j,t)
      \right\}dt.
    \end{equation}

\noindent\textbf{Claim:}
    \begin{equation}
      \lim_{\varepsilon_j\rightarrow0+}\varepsilon_b^j=0.\label{SSIC-E3.25}
    \end{equation}
Since
    \begin{eqnarray*}
      &&\left|\varepsilon_j^{N-1}\int^T_0[(\rho^j)^\gamma\phi](\varepsilon_j,t)dt
      \right|\leq\max_{t\in[0,T]}|\phi(\varepsilon_j,t)|\varepsilon_j^{N-1}
      \int^T_0(\rho^j)^\gamma(\varepsilon_j,t)dt\\
            &\leq&C\max_{t\in[0,T]}|\phi(\varepsilon_j,t)|
      \int^T_0 \int^{R_j}_{\varepsilon_j}[(\rho^j)^\gamma
     + |\partial_r(\rho^j)^\gamma|]r^{N-1}drdt,
    \end{eqnarray*}
        $$
         \int^T_0 \int^{R_j}_{\varepsilon_j}(\rho^j)^\gamma
     r^{N-1}drdt\leq C,
        $$
        $$
 \int^T_0 \int^{R_j}_{\varepsilon_j}
      |\partial_r(\rho^j)^\gamma|r^{N-1}drdt\leq  C\int^T_0 \int^{R_j}_{\varepsilon_j}[(\rho^j)^\gamma
     + |\partial_r(\rho^j)^\frac{\gamma}{2}|^2]r^{N-1}drdt\leq C,
        $$
     and
     $\lim_{\varepsilon_j\rightarrow0+}\max_{t\in[0,T]}|\phi(\varepsilon_j,t)|=0$,
     we have
        \begin{equation}
          \lim_{\varepsilon_j\rightarrow0+}\varepsilon_j^{N-1}\int^T_0[(\rho^j)^\gamma\phi]
          (\varepsilon_j,t)dt=0.
        \end{equation}
From (\ref{SSIC-E3.3}) and $u(\varepsilon_j,t)=0$, we get
    $$
        \rho^j_t(\varepsilon_j,t)+\rho^j(\varepsilon_j,t)\partial_ru^j(\varepsilon_j,t)=0.
    $$
Thus, using (\ref{SSIC-E2.4}), we have
    \begin{eqnarray*}
      &&\varepsilon_j^{N-1}\int^T_0((2h(\rho^j)+g(\rho^j))u^j_r\phi)(\varepsilon_j,t)dt=
      -\varepsilon_j^{N-1}\int^T_0(\frac{2h(\rho^j)+g(\rho^j)}{\rho}\partial_t\rho^j\phi)(\varepsilon_j,t)dt\\
      &=&
      -\varepsilon_j^{N-1}\int^T_0(2\partial_th(\rho^j)\phi)(\varepsilon_j,t)dt\\
            &=&2\varepsilon_j^{N-1}h(\rho^j_0(\varepsilon_j))\phi(\varepsilon_j,0)
            +2\varepsilon_j^{N-1}\int^T_0(h(\rho^j)\partial_t\phi)(\varepsilon_j,t)dt.
    \end{eqnarray*}
It is easy to obtain
    $$
    |\sqrt{\rho^j(\varepsilon_j,t)}|\leq
    C\|\sqrt{\rho^j}\|_{H^1([\varepsilon_j,R_j])}\leq
    C\varepsilon_j^{-\frac{N-1}{2}},
    $$
    $$
    |\sqrt{\rho^j(1,t)}|\leq
    C\|\sqrt{\rho^j}\|_{H^1([1,R_j])}\leq
    C,
    $$
        \begin{equation}
        |\bar{h}(\rho^j(\varepsilon_j,t))|\leq
        C|\bar{h}(\rho^j(1,t))|+\|\nabla \bar{h}\|_{L^2([\varepsilon^j,1])}
        \leq C+C\varepsilon_j^{-\frac{N-1}{2}},\label{SSIC-E3.33-1}
        \end{equation}
where $\bar{h}$ satisfies $\bar{h}(0)=0$ and
$\bar{h}'(s)=\frac{h'(s)}{\sqrt{s}}$. Since $h(s)\leq
\sqrt{s}\bar{h}(s)$, we have
    $$
    h(\rho^j(\varepsilon_j,t))\leq C+C\varepsilon_j^{-(N-1)}.
    $$
Thus, we can easily obtain
    $$
    \lim_{\varepsilon_j\rightarrow0+}\max_{t\in[0,T]}|\partial_t\phi(\varepsilon_j,t)|=0
    $$
and
    \begin{eqnarray*}
    \varepsilon_j^{N-1}\left|\int^T_0(h(\rho^j)\partial_t\phi)(\varepsilon_j,t)dt
    \right|\leq
    C\max_{t\in[0,T]}|\partial_t\phi(\varepsilon_j,t)|\rightarrow0,
    \end{eqnarray*}
as $\varepsilon_j\rightarrow0+$. Hence, we have
    $$
    \lim_{\varepsilon_j\rightarrow0+}\varepsilon_j^{N-1}\int^T_0((2h(\rho^j)+g(\rho^j))u^j_r\phi)(\varepsilon_j,t)dt=0.
    $$
Similarly, one can obtain that
    $$
 \lim_{\varepsilon_j\rightarrow0+}\varepsilon_j^N\int^T_0
 [(\rho^j)^\theta u^j_r\phi](\varepsilon_j,t)dt=0.
    $$
 Thus, (\ref{SSIC-E3.25}) holds.

Now, for any $\phi_2\in (C^2_c(\mathbb{R}^N\times[0,T]))^N$ with
supp$\phi_2(\cdot,t)\subset B_{{n}}$ and $\phi_2(\mathrm{x},T)=0$,
we set
    \begin{equation}
      \phi(r,t)=\int_S\phi_2(ry,t)\cdot ydS_y.
    \end{equation}
Since
    $$
    (r^{N-1}\phi)_r=\partial_r\int_{|\mathrm{x}|\leq
    r}\mathrm{div}\phi_2(\mathrm{x},t)d\mathrm{x}=r^{N-1}\int_S(\phi_2^i)_{x_i}(ry,t)dS_y,
    $$
we have by direct calculation that
    $$
    -\int^T_0\int^\infty_{\varepsilon_j}2h(\rho^j)(u^j_r\phi_r+\frac{(N-1)u^j\phi}{r^2})r^{N-1}drdt
    =-\int^T_0\int_{|\mathrm{x}|> {\varepsilon_j}}
    2h(\rho^j)D(\mathrm{U}^j):\nabla\phi_2d\mathrm{x}dt.
    $$
Similarly, one has
    \begin{eqnarray*}
        &&
    \int^t_0\int^\infty_{\varepsilon_j}(g(\rho^j)+\theta\varepsilon_j(\rho^j)^\theta)(u^j_r+\frac{(N-1)u^j}{r})
    (\phi_r+\frac{N-1}{r}\phi)r^{N-1}drdt\\
        &=&
       \int^t_0\int_{|\mathrm{x}|> {\varepsilon_j}}(g(\rho^j)+\theta\varepsilon_j(\rho^j)^\theta)\mathrm{div
       U}^j\mathrm{div}
    \phi_2d\mathrm{x}dt
     \end{eqnarray*}
and
    \begin{eqnarray*}
        &&\int^T_0\int^\infty_{\varepsilon_j}(\rho^j)^\theta
            (\frac{(N-1)u^j_r\phi}{r}+\frac{(N-1)u^j\phi_r}{r}+\frac{(N-1)(N-2)u^j\phi}{r^2})r^{N-1}drdt\\
                & =&\int^t_0\int_{|\mathrm{x}|> {\varepsilon_j}}(\rho^j)^\theta(\mathrm{div
       U}^j\mathrm{div}
    \phi_2-D(\mathrm{U}^j):\nabla\phi_2)d\mathrm{x}dt.
    \end{eqnarray*}

Thus, from (\ref{SSIC-E3.23}), we have
    \begin{eqnarray}
        &&  \int_{|\mathrm{x}|> {\varepsilon_j}}\rho^j_0\mathrm{U}^j_0\cdot\phi_2(\mathrm{x},0)
      d\mathrm{x}+\int^{T}_{0}\int_{\mathbb{R}^N}
      (\sqrt{\rho^j}\sqrt{\rho^j}\mathrm{U}^j\cdot\partial_t\phi_2
      +\sqrt{\rho^j}\mathrm{U}^j\otimes\sqrt{\rho^j}\mathrm{U}^j:\nabla\phi_2
    )d\mathrm{x}dt
      \nonumber\\
      &&+\int^{T}_{0}\int_{\mathbb{R}^N}(\rho^j)^\gamma\mathrm{div}\phi_2d\mathrm{x}dt
            -\int^T_0\int_{|\mathrm{x}|> {\varepsilon_j}}
            [2h(\rho^j)D(\mathrm{U}^j):\nabla\phi_2+g(\rho^j)\mathrm{div
       U}^j\mathrm{div}
    \phi_2]d\mathrm{x}dt\nonumber\\
                           &=&
     \varepsilon_j  \int^T_0\int_{|\mathrm{x}|> {\varepsilon_j}}
     [(\theta-1)(\rho^j)^\theta\mathrm{div
       U}^j\mathrm{div}
    \phi_2+(\rho^j)^\theta
            D(\mathrm{U}^j):\nabla\phi_2]d\mathrm{x}dt+\varepsilon_b^j    .
      \label{SSIC-E3.28}
    \end{eqnarray}

We proceed to show that each term on the left hand side of
(\ref{SSIC-E3.28}) converges to corresponding term in
(\ref{SSIC-E2.11}), and each term on the right hand side of
(\ref{SSIC-E3.28}) vanishes as $j\rightarrow\infty$.

First, the proof of the convergence of
$\rho^j\mathrm{U}^j\partial_t\phi_2$ is similar to that of
(\ref{SSIC-E3.22-1}).

    Next,  from
    Proposition \ref{SSIC-P3.4}, we obtain
        \begin{equation*}
          \int^T_0\int_{\mathbb{R}^N}\sqrt{\rho^j}\mathrm{U}^j\otimes\sqrt{\rho^j}\mathrm{U}^j:\nabla\phi_2d\mathrm{x}dt
          \rightarrow
           \int^T_0\int_{\mathbb{R}^N}\sqrt{\rho}\mathrm{U}\otimes\sqrt{\rho}\mathrm{U}:\nabla\phi_2d\mathrm{x}dt,
           \ \mathrm{as}\ j\rightarrow\infty.
        \end{equation*}

From Lemma \ref{SSIC-L3.3}, we have
    \begin{equation*}
      \int^{T}_{0}\int_{\mathbb{R}^N}(\rho^j)^\gamma\mathrm{div}\phi_2d\mathrm{x}dt
      \rightarrow
      \int^{T}_{0}\int_{\mathbb{R}^N}\rho^\gamma\mathrm{div}\phi_2d\mathrm{x}dt,
      \ \textrm{as }j\rightarrow\infty.
    \end{equation*}

Concerning the diffusion terms on the left hand side of
(\ref{SSIC-E3.28}), using (\ref{SSIC-E3.7}) and integration by
parts, we have
    \begin{eqnarray}
     &&\int^T_0\int_{|\mathrm{x}|>\varepsilon_j}2h(\rho^j)D(\mathrm{U}^j):\nabla\phi_2d\mathrm{x}dt\nonumber\\
      &=&-\int^T_0\int_{\mathbb{R}^N}\left[\frac{h(\rho^j)}{\sqrt{\rho^j}}(\sqrt{\rho^j}
      \mathrm{U}^j)\cdot\Delta\phi_2
      +\frac{h(\rho^j)}{\sqrt{\rho^j}}(\sqrt{\rho^j}
      \mathrm{U}^j)\cdot\nabla\mathrm{div}\phi_2\right]d\mathrm{x}dt\nonumber\\
            &&
            -\int^T_0\int_{B_{\varepsilon_j,{n}}}[(\sqrt{\rho^j}\mathrm{U}^j)\cdot
            (\nabla\bar{h}(\rho^j)\cdot\nabla)\phi_2
            +(\sqrt{\rho^j}\mathrm{U}^j)\cdot
            (\nabla\phi_2\cdot\nabla)\bar{h}(\rho^j)]d\mathrm{x}dt.\label{SSIC-E3.33}
    \end{eqnarray}
Using the similar argument as that in the proof of
(\ref{SSIC-E3.33-1}), we have
    $$
    \left\|\frac{h(\rho^j)}{\sqrt{\rho^j}}
    \right\|_{L^\infty([0,T];L^2(B_{{n}}))}\leq C_n,
    \ \textrm{ and } \|\rho^j\|_{L^\infty([\frac{1}{k},{n}]\times[0,T])}\leq C_{k,n}.
    $$
Then, using the similar argument  as that in the proof of
(\ref{SSIC-E3.22-1}),  we have
    \begin{equation}
    \int^T_0\int_{\mathbb{R}^N}\frac{h(\rho^j)}{\sqrt{\rho^j}}(\sqrt{\rho^j}\mathrm{U}^j)\cdot\Delta\phi_2d\mathrm{x}dt
    \rightarrow
    \int^T_0\int_{\mathbb{R}^N}\frac{h(\rho)}{\sqrt{\rho}}(\sqrt{\rho}\mathrm{U})\cdot\Delta\phi_2d\mathrm{x}dt,
    \end{equation}
and
     \begin{equation}
    \int^T_0\int_{\mathbb{R}^N}\frac{h(\rho^j)}{\sqrt{\rho^j}}(\sqrt{\rho^j}\mathrm{U}^j)\cdot
    \nabla\mathrm{div}\phi_2d\mathrm{x}dt
    \rightarrow
    \int^T_0\int_{\mathbb{R}^N}\frac{h(\rho)}{\sqrt{\rho}}(\sqrt{\rho}\mathrm{U})\cdot
    \nabla\mathrm{div}\phi_2d\mathrm{x}dt,
    \end{equation}
as $j\rightarrow\infty$. From Corollary \ref{SSIC-C3.1}, Lemma
\ref{SSIC-L3.2} and Propositions \ref{SSIC-P3.3}-\ref{SSIC-P3.4},
we have
    \begin{eqnarray}
      &&  \int^T_0\int_{B_{\varepsilon_j,\frac{1}{k}}}(\sqrt{\rho^j}
      \mathrm{U}^j)\cdot(\nabla\bar{h}(\rho^j)\cdot\nabla)\phi_2d\mathrm{x}dt
      \nonumber\\
        &\leq&\int^T_0\int_{B_{\varepsilon_j,\frac{1}{k}}\cap\{|\mathrm{U}^j|\leq M\}}+
        \int_{B_{\varepsilon_j,\frac{1}{k}}\cap\{|\mathrm{U}^j|> M\}}
        (\sqrt{\rho^j}\mathrm{U}^j)\cdot(\nabla\bar{h}(\rho^j)\cdot\nabla)\phi_2d\mathrm{x}dt
      \nonumber\\
      &\leq&C\|\nabla\phi_2\|_{L^\infty_{tx}}\|\nabla{\bar{h}(\rho^j)}\|_{L^\infty(0,T;L^2)}
      \left(M\|\sqrt{\rho^j}\|_{L^{\frac{2\gamma(N+1)}{N}}(\mathbb{R}^N\times[0,T])}
      |B_{\frac{1}{k}}|^{\frac{\gamma(N+1)-N}{2\gamma(N+1)}}\right.\nonumber\\
        &&\left.+\frac{1}{1+\ln(1+M^2)}\|\rho^j|\mathrm{U}^j|^2(1+\ln(1+|\mathrm{U}^j|^2))\|_{L^\infty(0,T;L^1)}
      \right)\nonumber\\
     &&\rightarrow0,\ \mathrm{as}\ M,\
     k\rightarrow\infty,\nonumber
    \end{eqnarray}
    \begin{equation}
       \int^T_0\int_{B_{\frac{1}{k}}}(\sqrt{\rho}\mathrm{U})\cdot(\nabla\bar{h}(\rho)\cdot\nabla)\phi_2d\mathrm{x}dt
     \rightarrow0,\ \mathrm{as}\ k\rightarrow\infty,\nonumber
    \end{equation}
and
    \begin{equation}
    \int^T_0\int_{B_{\frac{1}{k},{n}}}(\sqrt{
    \rho^j}\mathrm{U}^j)\cdot(\nabla\bar{h}(\rho^j)\cdot\nabla)\phi_2d\mathrm{x}dt
    \rightarrow
    \int^T_0\int_{B_{\frac{1}{k},{n}}}(\sqrt{\rho}\mathrm{U})
    \cdot(\nabla\bar{h}(\rho)\cdot\nabla)\phi_2d\mathrm{x}dt
    \nonumber
    \end{equation}
as $j\rightarrow\infty$. Thus, we have
    \begin{equation}
    \int^T_0\int_{B_{\varepsilon_j,{n}}}(\sqrt{\rho^j}\mathrm{U}^j)\cdot
            (\nabla\bar{h}(\rho^j)\cdot\nabla)\phi_2d\mathrm{x}dt
            \rightarrow
    \int^T_0\int_{\mathbb{R}^N}(\sqrt{\rho}\mathrm{U})\cdot
            (\nabla\bar{h}(\rho)\cdot\nabla)\phi_2d\mathrm{x}dt,
    \end{equation}
as $j\rightarrow\infty$. Similarly, we can obtain
    \begin{equation}
    \int^T_0\int_{B_{\varepsilon_j,{n}}}(\sqrt{\rho^j}\mathrm{U}^j)\cdot
            (\nabla\phi_2\cdot\nabla)\bar{h}(\rho^j)d\mathrm{x}dt
            \rightarrow
    \int^T_0\int_{\mathbb{R}^N}(\sqrt{\rho}\mathrm{U})\cdot
            (\nabla\phi_2\cdot\nabla)\bar{h}(\rho)d\mathrm{x}dt,
    \label{SSIC-E3.35}
    \end{equation}
as $j\rightarrow\infty$.     From
(\ref{SSIC-E3.33})-(\ref{SSIC-E3.35}),
 we obtain
    $$
      \int^T_0\int_{|\mathrm{x}|> {\varepsilon_j}}2h(\rho^j)
      D(\mathrm{U}^j):\nabla\phi_2d\mathrm{x}dt
      \rightarrow <2h(\rho)D(\mathrm{U}),\nabla\phi_2>,\ \mathrm{as}\ j\rightarrow\infty.
    $$
Similarly, we obtain
     $$
      \int^t_0\int_{|\mathrm{x}|> {\varepsilon_j}}g(\rho^j)\mathrm{div
       U}^j\mathrm{div}
    \phi_2d\mathrm{x}dt
      \rightarrow <g(\rho)\mathrm{div}\mathrm{U},\mathrm{div}\phi_2>,\ \mathrm{as}\ j\rightarrow\infty.
    $$

Up to now, we have proved  that each term on the left hand side of
(\ref{SSIC-E3.28}) converges to corresponding term in
(\ref{SSIC-E2.11}) as $j\rightarrow\infty$. In the following, we
prove that  each term on the right hand side of (\ref{SSIC-E3.28})
vanishes as $j\rightarrow\infty$.

From Lemma \ref{SSIC-L3.1}, we get
        \begin{eqnarray}
          &&\left|\varepsilon_j \int^T_0\int_{\mathbb{R}^N}(\rho^j)^\theta\mathrm{div
       U}^j\mathrm{div}    \phi_2d\mathrm{x}dt
          \right|\nonumber\\
                &\leq&C\sqrt{\varepsilon_j}\|\nabla\phi_2\|_{L^\infty_{tx}}\left(
                \varepsilon_j\int^T_0\int_{\mathbb{R}^N}(\rho^j)^\theta|\nabla \mathrm{
       U}^j|^2d\mathrm{x}dt
                \right)^\frac{1}{2}\left(
                \int^T_0\int_{\mathbb{R}^N}\rho^jd\mathrm{x}dt
                \right)^\frac{\theta}{2}|B_{{n}}|^{\frac{1-\theta}{2}}\nonumber\\
                    &\leq&C(T)\sqrt{\varepsilon_j}n^{\frac{N(1-\theta)}{2}}\label{SSIC-E3.34}
        \end{eqnarray}
and
    \begin{equation}
    \left|\varepsilon_j \int^T_0\int_{\mathbb{R}^N}(\rho^j)^\theta
    D       (U^j):\nabla   \phi_2d\mathrm{x}dt
          \right|\leq C(T)\sqrt{\varepsilon_j}n^{\frac{N(1-\theta)}{2}}.\label{SSIC-E3.35-1}
    \end{equation}
It follows from (\ref{SSIC-E3.25}) and
(\ref{SSIC-E3.34})-(\ref{SSIC-E3.35-1}) that  each term on the
right hand side of (\ref{SSIC-E3.28}) vanishes as
$j\rightarrow\infty$.

Taking the limit $j\rightarrow\infty$ in (\ref{SSIC-E3.28}), we
finish the proof of this proposition.
\end{proof}

From the above arguments, we can immediately finish the proof of
Theorem \ref{SSIC-T2.1}.

\end{document}